\documentclass[12pt]{amsart}
\usepackage{color}
\usepackage[all,cmtip]{xy}
\usepackage{amssymb}
\usepackage{amsmath}
\usepackage{amsthm}
\usepackage{amscd}
\usepackage{amsfonts}
\usepackage{graphicx}
\usepackage{multicol}

\setlength{\textwidth}{16truecm}
\setlength{\textheight}{22.3truecm}

\calclayout
%%%%%%%%%%%%%%%%%%%%%%%%

\newtheorem{dummy}{anything}[section]
\newtheorem{theorem}[dummy]{Theorem}

\newtheorem{lemma}[dummy]{Lemma}
\newtheorem{proposition}[dummy]{Proposition}

\theoremstyle{definition}%%Change Theoremstyle
\newtheorem{definition}[dummy]{Definition}

    \newtheorem{problem}[dummy]{Problem}

%%%%%%%%%%%%%%%%%%%%%%%%%%%%
  
%%%%%%%%%%%%%%%%%%%%%%%%%%

%%%%%%%%%%%%%%%%%%%%%%%%%%%%
%\renewcommand
%{\theequation}{\thesection.\arabic{dummy}}
%\newcommand
%{\eqncount}{\setcounter{equation}{\value{dummy}}%
%\addtocounter{dummy}{1}}
%%%%%%%%%%%%%%%%%%%%%%%%%%%%%%%%%%%%

\newcommand{\cF}{\mathcal F}

\newcommand{\cH}{\mathcal H}

\newcommand{\cK}{\mathcal K}

\newcommand{\bbC}{\mathbb C}

\newcommand{\bbS}{\mathbb S}
\newcommand{\bbZ}{\mathbb Z}

\DeclareMathOperator{\Hom}{Hom} 
\DeclareMathOperator{\Aut}{Aut}

\DeclareMathOperator{\Ind}{Ind} 

\DeclareMathOperator{\Res}{Res} 
\DeclareMathOperator{\Inf}{Inf}
 
\DeclareMathOperator{\Isot}{Isot}
\DeclareMathOperator{\Isog}{Iso}

\DeclareMathOperator{\ind}{Ind}

 \DeclareMathOperator{\rk}{rk}

\newcommand{\iso}{\cong}
\newcommand{\G}{\Gamma}

\newcommand{\Fa}{\cH}  %Family
\newcommand{\Fu}{\cF}   %Fusion

 \DeclareMathOperator{\Rep}{Rep}

\newcommand{\leftexp}[2]{{\vphantom{#2}}^{ #1}{\hskip-1pt#2}}%\leftexp
%{e}{F} produces a left superscript eF

\def\maprt#1{\smash{\,\mathop{\longrightarrow}\limits^{#1}\,}}

\begin{document}

\title{Fusion systems and group actions with abelian isotropy subgroups}

\author{\" Ozg\" un \" Unl\" u and Erg{\" u}n Yal\c c\i n }

\address{Department of Mathematics, Bilkent
University, Ankara, 06800, Turkey.}

\email{unluo@fen.bilkent.edu.tr \\
yalcine@fen.bilkent.edu.tr}

\keywords{}

\thanks{2000 {\it Mathematics Subject Classification.} Primary:
57S25; Secondary: 20D20.\\ Both of the authors are partially supported by T\" UB\. ITAK-TBAG/110T712.}

%\date{\today}

\begin{abstract}
We prove that if a finite group $G$ acts smoothly on a manifold $M$ so that all the isotropy subgroups are abelian groups with rank $\leq k$, then $G$ acts freely and smoothly on $M \times \bbS^{n_1} \times \dots \times \bbS^{n_k}$ for some positive integers $n_1,\dots, n_k$. We construct these actions using a recursive method, introduced in an earlier paper, that involves abstract fusion systems on finite groups. As another application of this method, we prove that every finite solvable group acts freely and smoothly on some product of spheres with trivial action on homology.  
\end{abstract}

\maketitle

\section{Introduction and statement of results}
\label{section:Introduction}Madsen-Thomas-Wall \cite{madsen-thomas-wall} proved that a finite group $G$ acts freely on a sphere if  (i) $G$ has no subgroup
isomorphic to the elementary abelian group $\bbZ /p \times \bbZ /p$ for any prime number $p$ and (ii) has no subgroup isomorphic to the dihedral group $D_{2p}$ of order $2p$ for any odd prime number $p$. At that time the necessity of these conditions were already known: P. A. Smith \cite{smith} proved that the condition (i) is necessary and Milnor \cite{milnor} showed the necessity of the condition (ii).

As a generalization of the above problem
we are interested in the
problem of characterizing those finite groups which can act freely and smoothly on a product of $k$ spheres for a given positive integer $k$. In the case $k=1$,
Madsen-Thomas-Wall uses surgery theory and considers the unit spheres of linear representations of subgroups to show that certain surgery obstructions vanish. In the case $k\geq 2$, one might hope
for the same method to work. However one would need to construct
free actions for certain families of groups where
the surgery obstructions could be detected
when they are reduced to subgroups in these families.
Some general methods which can be used to do constructions for some of these families are given in \cite{adem-davis-unlu}, \cite{unlu-yalcin1},  and \cite{unlu-yalcin2}. Our goal in this paper is to improve the methods used in \cite{unlu-yalcin2} and as a result obtain some new actions.

In the case $k=2$, Heller \cite{heller} showed that if
a finite group $G$ acts freely on a product of two spheres, then $G$ must have $\rk (G)\leq 2$ where $\rk (G)$, rank of $G$, denotes the maximum rank of elementary abelian subgroups $(\bbZ/p)^r$ in $G$. In \cite{unlu-yalcin2} we showed that this result has a converse for finite
$p$-groups, more precisely, we proved that a finite $p$-group $G$ acts freely and smoothly on a product of two spheres if and only if $\rk (G)\leq 2$. In \cite{unlu-yalcin2} we also proved that if a finite group $G$ acts smoothly on a manifold $M$ so that all the isotropy subgroups of $M$ are elementary abelian groups with rank $\leq k$, then $G$ acts freely and smoothly on $M \times
\bbS^{n_1} \times \dots \times \bbS^{n_k}$ for some positive integers $n_1,\dots, n_k$. This, in particular, implies that every extra-special $p$-group of rank $k$ acts freely and smoothly on a product of $k$ spheres.

To prove the results mentioned above, in \cite{unlu-yalcin2} we introduced a recursive method for constructing
group actions on products of spheres. The main idea of this recursive method is
to start with an action of a group $G$ on a manifold $M$ and obtain a new
action of $G$ on $M\times \mathbb{S}^N$ for some positive integer $N$ by gluing unit spheres of $\bbC G_x$-modules $V_x$ over $x\in M$ where $G_x$ denotes the isotropy subgroup of $G$ at $x$. In the construction we use a theorem of L\" uck-Oliver \cite{luck-oliver} on equivariant vector bundles. To apply the L\" uck-Oliver theorem, one needs to find a finite group $\Gamma$  which has a complex representation $V$ such that  the representations $V_x$ of isotropy subgroups $G_x$ can be obtained from $V$ by pulling back via a compatible family of homomorphisms $\alpha_x: G_x \to \Gamma$ (see Section \ref{section:ConstructionsOfFreeActions} for more details).

In  \cite{unlu-yalcin2} we introduced a systematic way of constructing a finite group $\G $ satisfying the properties above using abstract fusion systems.  The idea is the following: we first map all the isotropy groups into a finite group $S$, then we study the fusion system $\cF$  on $S$ that comes from the conjugations in $G$ and different choices of embeddings into $S$ (see Definition \ref{def:fusionsystems}  for a definition of a fusion system). Then, we use a theorem of S. Park \cite{park} to find $\Gamma$ as the automorphism group of a left $\cF$-stable  $S$-$S$-biset (see Definition \ref{lemma:FstableBisets}).

In this paper, we improve the method described above by proving a result that allows us to construct left $\cF$-stable bisets using strongly $\cF$-closed subgroups of $S$ (see Definition \ref{def:stronglyclosed} and Proposition
\ref{prop:Fchar_leftFbiset}). This result is particularly useful when $S$ is an abelian group. As a consequence, we obtain the following:

\begin{theorem}\label{theorem:AbelianIsotropy}
Let $G$ be a finite group acting smoothly on a manifold $M$. If all the isotropy subgroups of $M$ are abelian groups with rank $\leq k$, then $G$ acts freely and smoothly on $M \times \bbS^{n_1} \times \dots \times \bbS^{n_k}$ for some positive integers $n_1,\dots, n_k$.
\end{theorem}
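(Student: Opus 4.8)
The plan is to argue by induction on $k$, lowering the rank of the isotropy subgroups by one at each step by crossing with a sphere. The base case $k=0$ is vacuous, since an isotropy subgroup of rank $\le 0$ is trivial, so the action is already free. For the inductive step it is enough to prove the following local-to-global statement: \emph{if $G$ acts smoothly on a manifold $N$ so that every isotropy subgroup is abelian of rank $\le k$, then for some positive integer $n$ there is a smooth $G$-action on $N\times\bbS^{n}$ all of whose isotropy subgroups are abelian of rank $\le k-1$.} Granting this, one applies it to $N=M$ to get an action on $M\times\bbS^{n_k}$ with isotropy subgroups of rank $\le k-1$ (abelianness is automatic, as a subgroup of an abelian group is abelian), and then invokes the inductive hypothesis for $M\times\bbS^{n_k}$, reordering the sphere factors at the end.

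To prove the local-to-global statement I would use the recursive construction recalled in Section~\ref{section:ConstructionsOfFreeActions}: it suffices to produce a finite group $\Gamma$, a finite-dimensional $\bbC\Gamma$-module $V$, and a compatible family of homomorphisms $\alpha_x\colon G_x\to\Gamma$ for $x\in N$ such that, for every $x$, the pulled-back representation $\alpha_x^{*}V$ of $G_x$ satisfies $(\alpha_x^{*}V)^{B}=0$ for every subgroup $B\le G_x$ with $\rk B=k$. Indeed, feeding such data into the L\"uck--Oliver theorem produces a $G$-equivariant complex vector bundle $E\to N$ with $E_x\cong\alpha_x^{*}V$, and, as explained in that section (after stabilizing $V$ if necessary), the associated sphere bundle is diffeomorphic to $N\times\bbS^{n}$; since the isotropy subgroup of a point of $S(E)$ lying over $x$ is the stabilizer in $G_x$ of a vector of $\alpha_x^{*}V$, the displayed vanishing condition forces every new isotropy subgroup to have rank $\le k-1$.

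It remains to construct $(\Gamma,V,\{\alpha_x\})$, and this is where the new input enters. The isotropy subgroups of $N$ fall into finitely many $G$-conjugacy classes, all abelian of rank $\le k$, so they all embed into a finite abelian group $S$ of rank $\le k$; fix such embeddings. Following \cite{unlu-yalcin2}, form the fusion system $\cF$ on $S$ generated by the conjugations coming from $G$ together with these embeddings. Now choose a strongly $\cF$-closed subgroup $S_0\le S$ adapted to the rank function — concretely, $S_0$ is chosen so that the quotient $S/S_0$ is the ``rank $\le k-1$ obstruction'' of $S$, i.e.\ a rank-controlled quotient every one of whose non-trivial characters has kernel of rank exactly $k-1$; such an $S_0$ is strongly $\cF$-closed because rank is preserved under $\cF$-isomorphisms and $S$ is abelian. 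Apply Proposition~\ref{prop:Fchar_leftFbiset} to $S_0$ to obtain a left $\cF$-stable $S$--$S$-biset $\Omega$, set $\Gamma=\Aut(\Omega)$ as in Definition~\ref{lemma:FstableBisets}, and let $V$ be the $\bbC\Gamma$-module whose pullback to $S$ (through $\Omega$, via S.\ Park's theorem \cite{park}) is the representation $W$ of $S$ inflated from the sum of all non-trivial characters of $S/S_0$. Park's theorem then lifts the embeddings $G_x\hookrightarrow S$ to homomorphisms $\alpha_x\colon G_x\to\Gamma$ compatible with $G$-conjugation, with $\alpha_x^{*}V\cong W|_{G_x}$; and if $B\le G_x$ has rank $k$ and $v\in\alpha_x^{*}V$ is $B$-fixed, then $v$ lies in the $\psi$-isotypic summand for some character $\psi$ of $S$ occurring in $W$ with $B\le\ker\psi$, forcing $\rk\ker\psi\ge\rk B=k$, contrary to $\rk\ker\psi=k-1$. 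Hence $(\alpha_x^{*}V)^{B}=0$, as needed.

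The main obstacle is the compatibility hidden in the previous paragraph. A naive ``rank-lowering'' representation of $S$ need not be $\cF$-stable — restricting it along two $\cF$-conjugate subgroups of $S$ can give non-isomorphic representations, because $\cF$-conjugate subgroups of the abelian group $S$ need not have isomorphic quotients — so one cannot directly apply Park's theorem to build $\Gamma$. Overcoming this is precisely the role of Proposition~\ref{prop:Fchar_leftFbiset}: it manufactures a left $\cF$-stable biset from a strongly $\cF$-closed subgroup, and such subgroups are plentiful when $S$ is abelian; this is the improvement over \cite{unlu-yalcin2} that makes the argument go through for arbitrary abelian isotropy. Verifying the hypotheses of that proposition for the chosen $S_0$, and checking that the resulting $W$ is $\cF$-stable and has the rank-lowering property at the same time, is the technical heart of the proof.
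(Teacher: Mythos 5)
Your global strategy coincides with the paper's: induct on the rank, reduce to a one\hyphenation{step}-step rank\hyphenation{lowering}-lowering statement (Proposition~\ref{prop:AbelianIsotropy}), realize it through Proposition~\ref{prop:ConsSmoothActViaBisets}, embed the isotropy groups in a finite abelian $S$, and build the left $\cF$-stable biset from a strongly $\cF$-closed subgroup via Proposition~\ref{prop:Fchar_leftFbiset}. But the concrete choices you make defeat each other. Proposition~\ref{prop:Fchar_leftFbiset} guarantees that every $\varphi\in\Isot(\Omega)$ factors through $S_0$, so by Proposition~\ref{prop:ConsSmoothActViaBisets} the fixed-point test for a subgroup $U\leq H$ is whether $\Res^S_{\varphi(\iota_H(U)\cap S_0)}V$ has a trivial summand --- the representation is only ever evaluated on subgroups of $S_0$. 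Your $V$ is inflated from $S/S_0$, hence trivial on all of $S_0$, so this restriction always has a trivial summand and \emph{every} isotropy subgroup of $M$ still fixes a point of $M\times\bbS^N$: the step lowers nothing. (Relatedly, your identification $\alpha_x^{*}V\cong W|_{G_x}$ is false: $\Res^{\Gamma_\Omega}_S V_\Omega=\bbC\Omega\otimes_{\bbC S}W$ is a direct sum of modules \emph{induced} from subgroups of $S_0$, which is exactly why the test lands inside $S_0$.) The paper does the opposite: it takes $S_0=K=\{x\in S:\ x\ \text{has squarefree order}\}$ and $V$ a one-dimensional character of $S$ that is \emph{nontrivial} on every Sylow subgroup of $K$; since $\rk S=k$, a rank-$k$ subgroup of $S$ contains a full Sylow $p$-subgroup of $K$ for some $p$, and automorphisms of $K$ preserve Sylow subgroups, so the restriction of $V$ to $\varphi(\iota_H(U)\cap K)$ is nontrivial.

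Two further gaps. First, Proposition~\ref{prop:Fchar_leftFbiset} requires as input a left $\cF|_{S_0}$-stable $S_0$-$S_0$-biset $\Omega'$ with controlled isotropy, which you never construct; the paper produces it by showing that $K$, being a direct product of elementary abelian groups, is $\cF$-characteristic (every morphism $L\to K$ extends to an automorphism of $K$) and then taking $\Omega'=\coprod_{\varphi\in\Aut_{\cF}(K)}K\times_{\Delta(\varphi)}K$. Second, the subgroup $S_0$ you posit generally does not exist: for the maximal fusion system on an abelian $S$, strong closure forces $S_0$ to contain every element of $S$ whose order occurs in $S_0$, so the strongly closed subgroups are exactly $\{x\in S:\ x^m=1\}$ for $m$ dividing $\exp(S)$. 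For $S=\bbZ/p^2\times\bbZ/p$ (so $k=2$) the candidates are $1$, $K\cong(\bbZ/p)^2$, and $S$, and in each case either $S/S_0$ is trivial or it has a nontrivial character whose kernel in $S$ has rank $2$, not $k-1=1$. So ``a rank-controlled quotient all of whose nontrivial characters have kernel of rank $k-1$'' is not available, and no inflation-based $V$ can be rank-lowering, since $K$ itself has rank $k$.
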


A similar result has been proved recently in the homotopy category by Michele  Klaus \cite{klaus} using $G$-fibrations. This new tool for constructing $\cF$-stable $S$-$S$-bisets also makes it possible to tackle the following problem:

\begin{problem}\label{prob:MainProblem}
Does every finite group act freely on some product of spheres with trivial action on homology?
\end{problem}

It is known that every finite group can act freely on some product of spheres. This follows from a general construction given in \cite[page 547]{oliver} (which is attributed to J. Tornehave by Oliver). However, in this construction the free action is obtained by permuting the spheres, so the induced action on homology is not trivial. To find a free homologically trivial action of a finite group $G$ on a product of spheres, one may try to take the product of unit spheres $\bbS(V)$ over some family of complex representations $V$ of $G$. Actions that are obtained in this way are called linear actions on products of spheres. 
By construction linear actions are homologically trivial but not every finite group has a free linear action on a product of spheres. In fact, Urmie Ray \cite{ray} shows that finite groups that have such actions are rather special: If a finite group $G$ has a free linear action on a product of spheres, then all the nonabelian simple sections of $G$ are isomorphic to $A_5$ or $A_6$. 

Note that even solvable groups may not have free linear actions on a product of spheres. For example, the alternating group $A_4$ does not act freely on any product of equal dimensional spheres with trivial action on homology (see \cite[Theorem 1]{oliver}). So, it can not act freely on a product of linear spheres. On the other hand, it is easy to construct homologically trivial actions on products of spheres for supersolvable groups and many other groups with small rank and fixity (see \cite{ray} and \cite{unlu-yalcin1}).
 
To construct a free homologically trivial action on a product of spheres, one can start with an action on some product of spheres, say on a product of all linear spheres, then apply the fusion system methods described above to obtain an action on $M \times \bbS ^N$ for some $N$ so that the total size of isotropy subgroups is smaller. Continuing this way, one can inductively construct a free homologically trivial action on a product of spheres as long as at each step a compatible family of representations coming from a representation of a finite group $\Gamma$ can be found. So far, we could do this only for solvable groups:

\begin{theorem}\label{theorem:FreeActionsOfSolvableGroups}
Any finite solvable group can act freely on some product of spheres with trivial action on homology.
\end{theorem}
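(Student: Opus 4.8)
The plan is to prove Theorem~\ref{theorem:FreeActionsOfSolvableGroups} by induction on the order of the solvable group $G$, using the recursive construction based on $\cF$-stable bisets and the new criterion (Proposition~\ref{prop:Fchar_leftFbiset}) for building such bisets out of strongly closed subgroups. We start with the linear action of $G$ on the product $M_0 = \prod_V \bbS(V)$ over all irreducible complex representations $V$ of $G$; this action is homologically trivial, and its isotropy subgroups are exactly the subgroups $H \leq G$ that lie in the kernel of some irreducible, i.e.\ those $H$ which fail to be ``detected'' by the whole representation ring. Each step of the recursion must take a homologically trivial $G$-action on a manifold $M$ with isotropy family $\cH$ and produce a homologically trivial $G$-action on $M \times \bbS^N$ whose isotropy family is strictly smaller (say, in the sense that the maximal order of an isotropy subgroup drops, or more carefully that $\cH$ shrinks in a well-founded partial order), so that after finitely many steps the isotropy family contains only the trivial group.

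For a single step, given the isotropy family $\cH$ of $M$, the plan is to choose a prime $p$ and embed every $p$-subgroup occurring in $\cH$ into a common finite $p$-group $S$ (for instance a Sylow $p$-subgroup of $G$, or a suitable abelian target when the relevant subgroups are abelian), record the fusion system $\cF$ on $S$ induced by $G$-conjugation together with the chosen embeddings, and then invoke Park's theorem together with Proposition~\ref{prop:Fchar_leftFbiset}: a strongly $\cF$-closed subgroup of $S$ yields a left $\cF$-stable $S$-$S$-biset, hence a finite group $\Gamma$ with a representation $V$ from which the required compatible family $\alpha_x \colon G_x \to \Gamma$, $V_x = \alpha_x^* V$, can be extracted. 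Feeding $(V_x)$ into the L\"uck--Oliver equivariant bundle machinery (as in \cite{luck-oliver} and as used in \cite{unlu-yalcin2}) glues the unit spheres $\bbS(V_x)$ into a $G$-sphere bundle over $M$ whose total space is the desired $M \times \bbS^N$; choosing the $V_x$ fixed-point-free on the subgroups one wishes to eliminate makes the new isotropy family smaller, and choosing them so that $\Gamma$ acts trivially on homology (which is automatic for sphere bundles of complex bundles after passing to an appropriate power/Whitney sum) keeps the action homologically trivial. Solvability enters precisely here: it guarantees, via the normal series of $G$, that at every stage there is a normal $p$-subgroup or $p$-quotient structure rich enough that a strongly closed subgroup of $S$ detecting the ``bad'' isotropy is available, so the recursion does not stall.

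I expect the main obstacle to be showing that the recursion actually terminates, i.e.\ that at each stage one can genuinely find a strongly $\cF$-closed subgroup of $S$ whose associated $\cF$-stable biset produces representations $V_x$ that kill a nonempty collection of current isotropy subgroups without creating new ones (and without ruining homological triviality). Controlling fusion in $S$ for a general solvable $G$ is delicate: the fusion system $\cF$ need not be realized by a subgroup of $G$, and strongly closed subgroups can be scarce in nonabelian $S$. The likely resolution, and the reason the theorem is stated only for solvable groups, is to run the induction through a minimal normal subgroup: a solvable $G$ has an elementary abelian minimal normal $p$-subgroup $A$, one first arranges (using the abelian case, essentially Theorem~\ref{theorem:AbelianIsotropy} and its proof technique) to remove isotropy subgroups meeting $A$, then passes to $G/A$ by induction. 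Secondary technical points—verifying that the bundles can be chosen so that $\Gamma$ acts trivially on $H_*$, checking compatibility of the family $\{\alpha_x\}$ along inclusions and conjugations in $\cF$, and bookkeeping the dimensions $n_1,\dots$—are routine given the framework of \cite{unlu-yalcin2}, so I would treat them briefly and concentrate the argument on the termination/strongly-closed-subgroup step.
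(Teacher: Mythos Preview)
Your proposal has a genuine gap in the inductive step. You write: ``one first arranges \ldots\ to remove isotropy subgroups meeting $A$, then passes to $G/A$ by induction.'' But once you have a $G$-action on some $M$ with $A$ acting freely, there is no way to ``pass to $G/A$'': the quotient $M/A$ carries a $G/A$-action but is not a product of spheres, and the inductive hypothesis for $G/A$ produces a free $G/A$-action on some \emph{other} product of spheres $Y$, with no evident relation to $M$. Moreover, your preliminary step of ``removing isotropy subgroups meeting $A$'' is itself unclear, since the isotropy subgroups of your starting manifold $M_0=\prod_V \bbS(V)$ are not abelian in general, so the proof technique of Theorem~\ref{theorem:AbelianIsotropy} does not directly apply to them. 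There \emph{is} a correct variant of your idea: first apply the inductive hypothesis to $G/A$ to obtain a free, homologically trivial $G/A$-action on a product of spheres $Y$; inflate along $G\twoheadrightarrow G/A$ to get a $G$-action on $Y$ with constant isotropy $A$, which is elementary abelian; then apply Theorem~\ref{theorem:AbelianIsotropy} (together with the Gysin-sequence argument for homological triviality) to kill $A$. That works, but it is not what you wrote, and the order of operations is reversed.

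The paper's proof takes a different and more direct route: it does not induct on $|G|$ and does not start with $\prod_V \bbS(V)$ (it starts with $M=\mathrm{pt}$). Instead it runs the reduction entirely inside $G$, guided by the derived series $G=G^{(0)}\rhd G^{(1)}\rhd\cdots\rhd G^{(d)}=1$. Given an isotropy family $\cH$, one takes the largest $n$ with $H\leq G^{(n)}$ for all $H\in\cH$, maps each $H$ into the abelian quotient $G^{(n)}/G^{(n+1)}$ and then into $S=(\bbZ/m)^r$ (with $m$ the exponent of that quotient and $r$ the maximal rank of the images), puts the \emph{maximal} fusion system on $S$, and uses exactly the biset and one-dimensional representation from the proof of Proposition~\ref{prop:AbelianIsotropy} to drop $r$ by one. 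Iterating forces all isotropy into $G^{(n+1)}$, and solvability makes the series terminate. The point is that the maps $\iota_H:H\to S$ are allowed to have kernels (they factor through $H/(H\cap G^{(n+1)})$), so one never needs the isotropy subgroups themselves to be abelian---only their images in the abelian layer $G^{(n)}/G^{(n+1)}$.
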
  

The proof follows from special properties of maximal fusion systems on abelian groups. To extend the proof to all finite groups, one would need to study the conditions on a fusion system $\cF$ that would imply the existence of a left $\cF$-stable biset with large isotropy subgroups. We plan to study this in a future paper.

The paper is organized as follows: In Section \ref{section:ConstructionsOfFreeActions}, we recall the recursive method for constructing smooth actions that was introduced in \cite{unlu-yalcin2}. Theorem \ref{theorem:AbelianIsotropy} is proved in Section \ref{sect:Some new applications} and Theorem \ref{theorem:FreeActionsOfSolvableGroups} is proved in Section \ref{sect:solvable}.

\section{Constructions of smooth actions}
\label{section:ConstructionsOfFreeActions}

In this section, we summarize the recursive method that was introduced in \cite{unlu-yalcin2} for constructing free smooth actions on products of spheres. The main result of this section is Proposition \ref{prop:ConsSmoothActViaBisets} which is proved in Section 
\ref{subsect:Construction of group actions}. Most of the results in this section already appear in \cite{unlu-yalcin2}, but our presentation here is slightly different. We also introduce some new terminology which is used in the rest of the paper.

Let $G$ be a finite group and $M$ be a finite dimensional smooth manifold with a smooth $G$-action. Let $\Fa $ denote the family of isotropy subgroups of the $G$-action on $M$. 
Let $\G $ be a finite group  and
$${\mathbf A}=(\alpha _H)_{H\in \Fa }\in \lim _{\underset{H\in \Fa }{\longleftarrow}} \Rep (H,\Gamma )$$ be a \emph{compatible family of representations}. This means that
for every map $c_g: H\to K$ 
induced by conjugation with $g\in G$, 
there exists a $\gamma \in \G $ such that the following diagram commutes:
$$\xymatrix{
H \ar[d]^{c_g} \ar[r]^{\alpha _H}
& \Gamma \ar[d]^{ c_{\gamma }}  \\
K  \ar[r]^{\alpha _K}
&  \Gamma .}\\ $$
We sometimes write $(\alpha _H)$ instead of 
$(\alpha _H)_{H\in \Fa }$ to simplify the notation. We have the following geometric result which is the starting point for our constructions.

\begin{proposition}[Corollary 4.4 in \cite{unlu-yalcin2}]\label{prop:startingpoint}
Let $G$, $M$, $\Gamma$, and ${\mathbf A}=(\alpha_H)$ be as 
above. Given a unitary representation $\rho : \G \to U(n)$, 
there exist a positive integer $N$ and a smooth $G$-action 
on $M \times \bbS^N$ such that for every $x \in M$, the 
$G_x$-action on the sphere $\{x\}\times \bbS^N$ is 
diffeomorphic to the linear $G_x$-action on $\bbS(V^{\oplus 
k})$ where $V=\rho \circ \alpha_{G_x} $ and $k$ is some 
positive integer.
\end{proposition}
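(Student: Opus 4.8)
The plan is to invoke the L\"uck--Oliver theorem on equivariant vector bundles to realize the compatible family $\mathbf{A}$ geometrically, and then convert the resulting equivariant bundle into an action on a sphere bundle over $M$. First I would use the representation $V = \rho \circ \alpha_{G_x}$ to define, for each $x \in M$, a $\bbC G_x$-module $V_x$; the compatibility of $\mathbf{A} = (\alpha_H)$ (expressed by the commuting conjugation square with some $\gamma \in \Gamma$) guarantees exactly that the collection $(V_x)_{x \in M}$ is compatible with respect to the conjugation maps $c_g \colon G_x \to G_{gx}$, so that these local data patch into an equivariant structure. The L\"uck--Oliver theorem \cite{luck-oliver} then provides, after replacing $V$ by a sufficiently large multiple $V^{\oplus k}$, an actual $G$-equivariant complex vector bundle $E \to M$ whose fiber over $x$ is $G_x$-isomorphic to $V_x^{\oplus k}$. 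Here the passage to $V^{\oplus k}$ is needed because the L\"uck--Oliver realization statement requires the virtual dimension / stability hypotheses to hold, and multiplying the representation by $k$ absorbs the correction terms.

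Next I would equip $E$ with a $G$-invariant Hermitian metric (obtained by averaging an arbitrary metric over the compact group $G$, using that $M$ admits $G$-invariant partitions of unity) and take the fiberwise unit sphere bundle $\bbS(E) \to M$. This is a smooth closed manifold with a smooth $G$-action, and the projection $\bbS(E) \to M$ is $G$-equivariant with fiber over $x$ the sphere $\bbS(V_x^{\oplus k})$ carrying the linear $G_x$-action. To get the product form $M \times \bbS^N$ rather than merely a bundle, I would then choose $N$ large enough and a $G$-equivariant embedding of $E$ into the trivial bundle $M \times W$ for a suitable (genuine, non-equivariant after forgetting, but here we want) large $\bbC G$-module, or alternatively stabilize: adding a trivial bundle $M \times \bbC^m$ with trivial $G_x$-action on fibers does not change the diffeomorphism type of the $G_x$-action on the relevant sphere up to the join with a trivial sphere, and by a standard argument (already carried out in \cite{unlu-yalcin2}) one arranges $\bbS(E \oplus \underline{\bbC^m})$ to be $G$-diffeomorphic to $M \times \bbS^N$ with $N = 2(\dim_{\bbC} V \cdot k + m) - 1$. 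The fiberwise linear structure is preserved under this stabilization, so over each $x$ the $G_x$-action on $\{x\} \times \bbS^N$ is diffeomorphic to the linear action on $\bbS(V^{\oplus k})$ (absorbing the trivial summand into a reindexing of $k$, or stating it as a join, exactly as in the cited corollary).

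The main obstacle is the application of the L\"uck--Oliver theorem: one must check that the compatible family $\mathbf{A}$ really does satisfy the hypotheses of their realization result, in particular that the family of representations is ``coherent'' across the orbit category of $M$ and that the relevant obstruction groups (living in the cohomology of the orbit category with coefficients in the representation ring functor) vanish after passing to a multiple. This is precisely where the hypothesis that $\mathbf{A}$ lies in $\varprojlim_{H \in \Fa} \Rep(H, \Gamma)$ is used, and where the integer $k$ enters. Since this is Corollary 4.4 of \cite{unlu-yalcin2}, I would not reprove the L\"uck--Oliver input from scratch but rather cite it and check that our setup matches its hypotheses; the only genuinely new verification is that the biset-theoretic construction of $\Gamma$ and $\mathbf{A}$ in the present paper still produces a compatible family in the required sense, which is immediate from the definition of compatibility recalled above.
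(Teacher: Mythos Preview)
The paper does not give a proof of this proposition; it is imported as Corollary~4.4 of \cite{unlu-yalcin2} and used as a black box throughout, so there is no argument in the present paper to compare against.  Your outline --- invoke the L\"uck--Oliver theorem to realize the compatible family $(\rho\circ\alpha_H)_{H\in\Fa}$ by a $G$-equivariant complex vector bundle $E\to M$ whose fiber over $x$ is $V_{G_x}^{\oplus k}$, then pass to the unit sphere bundle --- is indeed the strategy of \cite{unlu-yalcin2}, as the introduction here confirms.

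The one step in your sketch that does not work as written is the passage from the sphere bundle $\bbS(E)$ to an honest product $M\times\bbS^N$.  Your proposal of adding a trivial $G$-bundle $M\times\bbC^m$ does not accomplish this: $E\oplus\underline{\bbC^m}$ is nonequivariantly trivial only if $E$ was already stably trivial, which is a genuine constraint (vanishing of the Chern classes of $E$) and not a consequence of stabilizing by trivial summands.  Moreover, such a stabilization would change the fiber representation from $V^{\oplus k}$ to $V^{\oplus k}\oplus\bbC^m$, which is not of the form $V^{\oplus k'}$ unless $V$ itself already contains a trivial summand; your ``absorbing the trivial summand into a reindexing of $k$'' therefore does not recover the conclusion as stated.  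The nonequivariant triviality of the bundle has to be arranged as part of the L\"uck--Oliver realization step itself --- this is where the freedom in the choice of $k$ does real work --- and that is precisely the substantive content hidden behind the citation.  Since you explicitly defer this point to \cite{unlu-yalcin2}, your proposal is acceptable as a roadmap, but the trivialization mechanism you sketch is not correct.
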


As discussed in \cite{unlu-yalcin2}, there is a
way to describe compatible representations using the terminology of fusion systems. 
Here we again consider fusion systems on any 
group (not necessarily a 
$p$-group, in fact, not necessarily a finite 
group).  

\begin{definition}\label{def:fusionsystems} 
Let $S$ be a group. A {\it fusion system} $\Fu$ on 
$S$ is a category whose objects are subgroups of $S$ and whose morphisms are injective group homomorphisms where the composition of morphisms in $\Fu $ is the usual composition of group homomorphisms and where for every $P,Q \leq S$, the morphism set $\Hom _{\Fu} (P,Q)$ satisfies the following:
\begin{enumerate}
\item $\Hom _S(P,Q) \subseteq \Hom _{\Fu} (P,Q)$ where $\Hom _S (P,Q)$ is the set of all conjugation homomorphisms induced by elements in $S$.
\item For every morphism $\varphi \in \Hom _{\Fu} (P,Q)$, the induced isomorphism $P\to \varphi(P)$ and its inverse are also morphisms in $\Fu$.
\end{enumerate}
\end{definition}

If $S$ is a subgroup of a group $\G $, then an 
obvious example of a fusion system is the fusion system $\Fu _S(\G )$ on $S$ for which the set of morphisms $\Hom _{\Fu} (P, Q)$ is defined as the set of all conjugations $c_{\gamma } (h)=\gamma h\gamma ^{-1}$ 
where  $\gamma \in \G $.

\subsection{Compatible homomorphisms} We now discuss another way to describe compatible representations using fusion systems. Let $G$ be a finite group and $\cH$ be a family of subgroups of $G$ closed under conjugation. Let $\Gamma$ be any group and $(\alpha_H)_{H \in \cH}$ be a family of homomorphisms $\alpha_H: H \to \Gamma$. Assume that $S$ is the subgroup of $\G $ generated by the union $$\displaystyle \cup \left\{\alpha_H(H)\,|\,H\in \cH \right\}.$$
Let $\{\iota_H : H \to S \ | \ H \in \cH \}$ be 
a family of maps such that $\alpha_H$ is equal to the composition $$ H \maprt{\iota _H } S \hookrightarrow \G.$$
Suppose that there is a fusion system $\cF$ on $S$ such that for every map $c_g: H \to K$ induced by conjugation between subgroups in $\Fa $, 
there is a monomorphism $f\in \cF$ such that the following diagram commutes
$$\xymatrix{ H \ar[d]^{c_g} \ar[r]^{\iota _H\ \ } & \iota _H (H) \ar@{>->}[d]^{f} \\
K  \ar[r]^{\iota _K\ \ } & \iota _K (K). }$$
Assume that $\cF$ is the smallest fusion system with these properties. Then one sees that 
the family $(\alpha_H)_{H \in \cH}$ is a compatible family of representations if and only if $\Fu \subseteq \Fu _S(\G )$.  

Before we continue our summary of the construction that we used in \cite{unlu-yalcin2}, we introduce a new terminology to 
give a common name to families of homomorphisms that satisfy properties 
like the families $(\alpha _H)$ and $(\iota _H)$ do satisfy.
We consider them as homomorphisms between families of 
subgroups in fusions systems. 

\begin{definition}\label{def:compatiblehomomorphism} Let $\cH $ be a family of objects in a fusion system 
$\Fu _G$ on a group $G$ and $\Fu _{ S }$ be a fusion system on a group $ S  $. For each $H\in \cH $, let $\iota  _H : H\to  S  $ be a group homomorphism. Then we say ${\mathbf 
A}=\left(\iota _H\right)_{H\in \cH }$ is a {\it compatible family of homomorphisms}  
from $\cF _G$ to  $\Fu _{ S }$ (supported by $\cH$) if for every $H,K\in \cH $, and every morphism $f: H \to K$ in $\Fu _G$ there exists a morphism 
$\widetilde{f}: \iota  _H(H)\to \iota  _K(K) $ in $\Fu _{ S }$ such 
that the following diagram
\begin{equation*}
\xymatrix{H \ar[d]^{f} \ar[r]^{\iota  _H \ \ \ }
& \iota  _H(H) \ar[d]^{\widetilde{f} }  \\
K  \ar[r]^{\iota  _K \ \ \ } & \iota  _K(K)}
\end{equation*}
commutes.  
\end{definition}
 
In particular, a compatible family of representations $(\alpha_H : H \to \G )_{H\in \cH}$ is a compatible family of homomorphisms from $\cF _G(G)$ to $\cF _{\G} (\G)$ supported by $\cH$.

\subsection{Construction of $\Gamma$}  This construction is the same as the construction given by Park \cite{park} for saturated fusion systems. Let $S$ be a finite group and $\Omega $ be an
$S$-$S$-biset. Let $\Gamma _{\Omega}$ denote the group of
automorphisms of the set $\Omega $ preserving the right $S$-action. In other words,
$$\Gamma _{\Omega }=\{f:\Omega \to \Omega \,|\, \forall s\in S, \,
\forall x \in \Omega, \, f(xs)=f(x)s\}.$$
Define $\iota :S\to \Gamma _{\Omega }$ as the homomorphism
satisfying $\iota (s)(x)=sx$ for all $x\in \Omega $. If the left
$S$-action on ${\Omega }$ is free and $\Omega $ is non-empty, then
$\iota $ is a monomorphism, hence we can consider $S$
as a subgroup of $\Gamma _{\Omega }$. 

We now introduce some terminology about bisets. 
Let $\Omega $ be an $S$-$S$-biset, $Q$ be a subgroup of $S$, and $\varphi : Q \to S$ be a monomorphism. Then, we write ${}_Q \Omega  $ to denote the
$Q$-$S$-biset obtained from $\Omega $ by restricting the left $S$-action to $Q$ and we write ${}_{\varphi} \Omega  $ to denote the
$Q$-$S$-biset obtained from $\Omega $ where the left $Q$-action is
induced by $\varphi $. A key lemma that we used in our previous paper 
\cite{unlu-yalcin2} is the following:

\begin{lemma}\label{lemma:FstableBisets}{\cite[Theorem 3]{park}} Let 
$\Omega $ be an $S$-$S$-biset with a free left $S$-action and let $Q$ be a
subgroup of $S$ and $\varphi :Q\to S $ be a monomorphism. Then,
 $ _{\varphi }{\Omega } $ and $ _{Q}{\Omega } $ are
isomorphic as $Q$-$S$-bisets if and only if $\varphi $ is a morphism
in the fusion system $\Fu _S (\Gamma _{\Omega }).$
\end{lemma}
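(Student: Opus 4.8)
The proof is essentially a dictionary between ``isomorphism of $Q$-$S$-bisets'' and ``conjugation inside $\G _{\Omega }$'', so the plan is to set up that dictionary and then match the two sides. First I would record the basic observations: a bijection $\Omega \to \Omega $ commuting with the right $S$-action is precisely an element of $\G _{\Omega }$; the left $S$-action on $\Omega $ is by construction the action through $\iota :S\hookrightarrow \G _{\Omega }$; and freeness of the left action together with $\Omega \neq \emptyset $ makes $\iota $ a monomorphism. This last point is what lets an identity $\iota (\varphi (q))=\gamma \iota (q)\gamma ^{-1}$ in $\G _{\Omega }$ be read back as the equality $\varphi (q)=c_{\gamma }(q)$ in $S$, equivalently as the statement that $\varphi =c_{\gamma }|_Q$ is a morphism in $\Fu _S(\G _{\Omega })$.

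For the ``if'' direction I would take $\gamma \in \G _{\Omega }$ with $\gamma Q\gamma ^{-1}\leq S$ and $\varphi =c_{\gamma }|_Q$, and show that $\gamma $ itself, regarded as a map $_{Q}\Omega \to {}_{\varphi }\Omega $, is an isomorphism of $Q$-$S$-bisets. It is a bijection; it is right $S$-equivariant because $\gamma \in \G _{\Omega }$; and applying the relation $\iota (\varphi (q))=\gamma \iota (q)\gamma ^{-1}$ to a point and substituting $x\mapsto \gamma (x)$ gives $\varphi (q)\cdot \gamma (x)=\gamma (q\cdot x)$ for all $q\in Q$ and $x\in \Omega $, which is exactly left $Q$-equivariance from the inclusion action on the source to the $\varphi $-twisted action on the target.

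For the ``only if'' direction I would start from an isomorphism $\psi :{}_{Q}\Omega \to {}_{\varphi }\Omega $ of $Q$-$S$-bisets. Being a bijection of $\Omega $ that commutes with the right $S$-action, $\psi $ lies in $\G _{\Omega }$; left $Q$-equivariance, which here reads $\psi (q\cdot x)=\varphi (q)\cdot \psi (x)$ for all $q\in Q$ and $x\in \Omega $, rearranges to $\psi \circ \iota (q)=\iota (\varphi (q))\circ \psi $, hence $\iota (\varphi (q))=\psi \iota (q)\psi ^{-1}=c_{\psi }(\iota (q))$ in $\G _{\Omega }$. Since $\iota $ is injective this forces $\varphi (q)=c_{\psi }(q)$ for every $q\in Q$, so $\varphi =c_{\psi }|_Q$ is a morphism in $\Fu _S(\G _{\Omega })$.

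I do not expect a genuine obstacle: the closest thing is notational, namely keeping the two left $Q$-actions on the underlying set $\Omega $ apart — the restriction along $Q\leq S$ used on $_{Q}\Omega $ versus the $\varphi $-twisted one used on $_{\varphi }\Omega $ — and remembering that a morphism of $\Fu _S(\G _{\Omega })$ with source $Q$ is by definition the restriction to $Q$ of some $c_{\gamma }$ with $\gamma \in \G _{\Omega }$ and image in $S$. Once these conventions are pinned down the two implications are literally the same computation read in opposite directions, and in particular no saturation or finiteness hypothesis on the fusion system enters.
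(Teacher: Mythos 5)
Your argument is correct and complete: the dictionary $\iota(\varphi(q))=\gamma\iota(q)\gamma^{-1}\Leftrightarrow \gamma(q\cdot x)=\varphi(q)\cdot\gamma(x)$ is exactly the content of the lemma, and you use freeness (plus $\Omega\neq\emptyset$) in the one place it is needed, namely to make $\iota$ injective so that equalities in $\Gamma_\Omega$ can be read back in $S$. The paper itself gives no proof but only cites Park's Theorem~3, and your two-directional computation is the standard argument for that result, so there is nothing to add.
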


We make the following definition for the situation considered in
Lemma \ref{lemma:FstableBisets}.

\begin{definition}\label{definition:FStableBiset}
Let $\Fu$ be a fusion system on a finite group $S$. Then, a left
free $S$-$S$-biset ${\Omega }$ is called {\it left} $\Fu $-{\it
stable} if for every subgroup $Q \leq S$ and $\varphi \in \Hom_{\Fu}
(Q, S)$, the $Q$-$S$-bisets ${}_Q {\Omega } $ and $ {}_{\varphi}
{\Omega }$ are isomorphic.
\end{definition}

Hence, by Lemma \ref{lemma:FstableBisets}, we have the following:

\begin{proposition}\label{prop:LeftFStable_FusionSystemInclusion}
Let $\Fu$ be a fusion system on a finite group $S$. If $\Omega $ is
a left $\Fu $-stable $S$-$S$-biset, then $\Fu\subseteq \Fu _S(\G
_{\Omega })$.
\end{proposition}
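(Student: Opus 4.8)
The plan is to read the statement off directly from Park's criterion, Lemma~\ref{lemma:FstableBisets}. Since $\Omega$ carries a free left $S$-action (and is nonempty), the map $\iota\colon S\to\G_\Omega$ is a monomorphism, so $S$ is a subgroup of $\G_\Omega$ and $\Fu_S(\G_\Omega)$ is a genuine fusion system on $S$; it has the same objects as $\Fu$, namely all subgroups of $S$. So I only need to verify $\Hom_\Fu(P,Q)\subseteq\Hom_{\Fu_S(\G_\Omega)}(P,Q)$ for every pair of subgroups $P,Q\leq S$. Here I would first reduce to the case $Q=S$: given $\psi\in\Hom_\Fu(P,Q)$, post-compose with the inclusion $\inc\in\Hom_S(Q,S)\subseteq\Hom_\Fu(Q,S)$ (property~(i) of a fusion system, Definition~\ref{def:fusionsystems}) to obtain $\varphi:=\inc\circ\psi\in\Hom_\Fu(P,S)$. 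If we know $\varphi$ is a morphism of $\Fu_S(\G_\Omega)$, then corestricting to $\varphi(P)=\psi(P)\leq Q$ — a morphism of $\Fu_S(\G_\Omega)$ by property~(ii) applied to that fusion system — and following with the inclusion $\psi(P)\hookrightarrow Q$ (property~(i) again) recovers $\psi$ as a morphism of $\Fu_S(\G_\Omega)$.

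So the remaining task is: for an arbitrary morphism $\varphi\in\Hom_\Fu(P,S)$, which is in particular a monomorphism $P\to S$, show that $\varphi$ is a morphism of $\Fu_S(\G_\Omega)$. Since $\Omega$ is left $\Fu$-stable, Definition~\ref{definition:FStableBiset} — with the subgroup taken to be $P$ and the morphism taken to be $\varphi$ — yields an isomorphism ${}_P\Omega\cong{}_\varphi\Omega$ of $P$-$S$-bisets. Now apply Lemma~\ref{lemma:FstableBisets} with ``$Q$'' there equal to $P$ and the monomorphism equal to $\varphi$: the existence of this isomorphism forces $\varphi$ to be a morphism of $\Fu_S(\G_\Omega)$, as desired. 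Together with the reduction above this proves $\Fu\subseteq\Fu_S(\G_\Omega)$.

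I do not anticipate a genuine obstacle: all the substance is already packaged in the quoted Lemma~\ref{lemma:FstableBisets} (Park's theorem). The only point demanding a little care is the routine bookkeeping with properties~(i) and~(ii) of a fusion system that lets one upgrade a statement about morphisms with target $S$ — which is the shape in which Park's lemma is phrased — to a statement about arbitrary morphisms $P\to Q$ in $\Fu$; I isolated that step in the first paragraph.
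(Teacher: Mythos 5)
Your proposal is correct and is essentially the paper's own argument: the paper derives the proposition directly from Lemma~\ref{lemma:FstableBisets} (Park's theorem) combined with Definition~\ref{definition:FStableBiset}, exactly as you do. The only thing you add is the explicit (and correct) bookkeeping with properties~(i) and~(ii) of Definition~\ref{def:fusionsystems} to pass from morphisms $P\to S$ to arbitrary morphisms $P\to Q$, which the paper leaves implicit.
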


In \cite{park}, S. Park actually proves that $\cF$ is a full subcategory of $\cF (\G _{\Omega})$, i.e., $\cF=\cF _S (\Gamma _{\Omega} )$ when $\Gamma$ is a characteristic biset for the fusion system $\cF$. 

We now discuss a particular construction of a representation for $\Gamma _{\Omega}$ which comes from a representation of $S$.  

\subsection{Construction of a representation for $\Gamma$}
Let $V$ be a left $\bbC S$-module and $\Omega $ 
be an $S$-$S$-biset. We define the $\bbC \G_{\Omega}$-module $V _{\Omega}$ as follows
$$V _{\Omega}=\bbC \Omega \otimes _{\bbC S} V$$  
where $\bbC \Omega $ denotes the permutation $\bbC S$-$\bbC S$-bimodule
with basis given by $\Omega $. The left $\Gamma
_{\Omega}$-action on $\bbC \Omega $ is given by evaluation of the bijections in $\Gamma _{\Omega}$ at the elements of $\Omega $ and $V_{\Omega}$ is considered as a left $\bbC \Gamma _{\Omega}$-module via this action. 

Note that every transitive $S$-$S$-biset is of the form $S\times _{\Delta}S$ for some $\Delta \leq S \times S$, where $S\times _{\Delta} S $ is the set of equivalence classes of pairs $(s_1, s_2)$ where $(s_1t_1, s_2)\sim
(s_1, t_2s_2)$ if and only if $(t_1, t_2)\in \Delta$. The left and right
$S$-actions are given by the usual left and right multiplication in $S$. An $S$-$S$-biset is called bifree if both left and right $S$-actions are free. It is clear from the above description and from Goursat's lemma that a transitive $S$-$S$-biset $S \times _{\Delta} S$ is bifree if and only if  $\Delta$ is a graph of an injective map $\varphi: Q \to S$ where $Q \leq S$. In this case, we denote $\Delta$ by $$\Delta (\varphi )=\{ (s, \varphi(s) ) \ | \ s\in Q \}.$$ So, a bifree $S$-$S$-biset $\Omega$ is a disjoint union of bisets of the form $S\times _{\Delta (\varphi)} S $ where $\varphi : Q \to S$ is a monomorphism. We define $\Isot ({\Omega })$, the isotropy of ${\Omega }$, as the following set:
$$\Isot ({\Omega })=
\left\{ \varphi :Q\to S \left|\  S\times _{\Delta (\varphi)} S
\text{ is isomorphic to a transitive summand of }{\Omega } \right.
\right\}.$$

Every transitive biset can be written as a product of five basic bisets (see \cite[Lemma 2.3.26]{bouc}). Since
$\Omega$ is bifree, only three of these basic bisets, namely
restriction, isogation, and induction, are needed to express the
transitive summands of $\Omega$ as a composition of basic bisets.
By writing each transitive summand of $\Omega$ as a
composition of the three basic bisets, we can express
$\Res ^{\Gamma _{\Omega}}_{S} V_{\Omega}= \bbC  \Omega
\otimes_{\bbC S} V$ as a direct sum of $\bbC S$-modules of the form
$$\Ind ^{S}_{Q} \Isog ^* (\varphi ) \Res ^{S}_{\varphi (Q)} V$$ where
$\varphi:Q\to S$ is in $\Isot (\Omega)$. Note that $\Isog ^*
(\varphi)$ is the contravariant isogation defined by $\Isog ^*
(\varphi) (M)=\varphi^* (M)$ where  $M$ is a $\varphi (Q)$-module.

\subsection{Construction of group actions}\label{subsect:Construction of group actions} Let $V$ be a left $\bbC S$-module, $\Omega $ be a bifree
$S$-$S$-biset, and let $V _{\Omega}$ be the $\bbC \Gamma
_{\Omega}$-module constructed above. Then, for every $H\leq S$, the
$\bbC H$-module $\Res ^{\Gamma _{\Omega}}_{H} V _{\Omega}$ is a direct sum of modules in the form
$$\Ind ^{H}_{H\cap  Q^x} \Isog ^*(\varphi \circ c_{x}) \Res
^{S}_{\varphi (\leftexp{x}H\cap Q)} V$$ where $x\in S$ and
$\varphi:Q\to S$ is in $\Isot (\Omega )$ (see \cite[Proposition 5.8]{unlu-yalcin2}). Now, we state the main result of this section.

\begin{proposition} \label{prop:ConsSmoothActViaBisets}
Let $G$ be a finite group, $M$ be a finite dimensional smooth manifold with a smooth $G$-action, and $\Fa$ denote the family of isotropy subgroups of the $G$-action on $M$. Let $\cF $ be a fusion system on a finite group $S$ and $\Omega$ be a left $\cF $-stable $S$-$S$-biset. 

Then, given a compatible family of homomorphisms $\left(\iota _H\right)_{H\in \Fa}$ from $\cF _G(G)$ to $\cF $ supported by $\Fa$  and a $\mathbb{C}S$-module $V$, we can construct a smooth $G$-action on $M \times \bbS^N$ for some positive integer $N$ so that a subgroup $K \leq G$ fixes a point on $M \times \bbS^N$ if and only if there exists $H \in \Fa$, $x \in S$, and $\varphi : Q \to S$ in $\Isot (\Omega)$ such that $K \leq H$ and $\Res ^S _{\varphi(\leftexp{x}L \cap Q)} V$ has a trivial summand where $L=\iota _H (K)$.
\end{proposition}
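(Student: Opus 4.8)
The plan is to assemble Proposition~\ref{prop:ConsSmoothActViaBisets} from the three ingredients already developed in this section: the biset-to-group construction $\Omega \mapsto \Gamma_{\Omega}$, the biset-to-representation construction $V \mapsto V_{\Omega}$, and the geometric gluing result Proposition~\ref{prop:startingpoint}. First I would set $\Gamma = \Gamma_{\Omega}$ and recall that since $\Omega$ is left $\cF$-stable it is in particular left free (by Definition~\ref{definition:FStableBiset}), so the canonical map $\iota : S \hookrightarrow \Gamma_{\Omega}$ is a monomorphism and we may regard $S \leq \Gamma_{\Omega}$. By Proposition~\ref{prop:LeftFStable_FusionSystemInclusion}, $\cF \subseteq \cF_S(\Gamma_{\Omega})$. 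Composing the given compatible family of homomorphisms $(\iota_H)_{H \in \Fa}$ from $\cF_G(G)$ to $\cF$ with this inclusion of fusion systems yields a compatible family of homomorphisms from $\cF_G(G)$ to $\cF_S(\Gamma_{\Omega})$, i.e.\ the maps $\alpha_H : H \xrightarrow{\iota_H} S \hookrightarrow \Gamma_{\Omega}$ form a compatible family of representations $(\alpha_H)_{H \in \Fa}$ in the sense of the discussion following Definition~\ref{def:compatiblehomomorphism}; this is exactly the setup required by Proposition~\ref{prop:startingpoint}.

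Next I would produce the unitary representation of $\Gamma_{\Omega}$. Starting from the $\bbC S$-module $V$, form $V_{\Omega} = \bbC\Omega \otimes_{\bbC S} V$, a $\bbC\Gamma_{\Omega}$-module, and let $\rho : \Gamma_{\Omega} \to U(n)$ be (a unitarization of) the corresponding representation. Applying Proposition~\ref{prop:startingpoint} with this $\Gamma_{\Omega}$, $(\alpha_H)$, and $\rho$ gives a positive integer $N$ and a smooth $G$-action on $M \times \bbS^N$ such that for each $x \in M$, the $G_x$-action on $\{x\} \times \bbS^N$ is the linear action on $\bbS(W^{\oplus k})$ where $W = \rho \circ \alpha_{G_x} = \Res^{\Gamma_{\Omega}}_{G_x} V_{\Omega}$ (via $\alpha_{G_x}$) and $k \geq 1$. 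A subgroup $K \leq G$ fixes a point of $M \times \bbS^N$ if and only if there is $x \in M$ with $K \leq G_x$ and $K$ fixes a point of $\bbS(W^{\oplus k})$; the latter happens iff $\Res^{G_x}_K W$ has a trivial summand, iff $\Res^{G_x}_K \Res^{\Gamma_{\Omega}}_{G_x} V_{\Omega} = \Res^{\Gamma_{\Omega}}_{K} V_{\Omega}$ (pulled back along $\alpha$) contains the trivial representation.

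The remaining work — and the main technical step — is to translate ``$\Res^{\Gamma_{\Omega}}_K V_{\Omega}$ contains a trivial summand'' into the combinatorial condition on $\Isot(\Omega)$ stated in the proposition. For this I would invoke the decomposition recalled at the end of Subsection~\ref{subsect:Construction of group actions}: for $L = \iota_H(K) \leq S$ (taking $H \in \Fa$ with $K \leq H$, so that $K$ maps into $S$ via $\iota_H$), $\Res^{\Gamma_{\Omega}}_{L} V_{\Omega}$ is a direct sum, over $x \in S$ and $\varphi : Q \to S$ in $\Isot(\Omega)$, of modules of the form $\Ind^{L}_{L \cap Q^x} \Isog^*(\varphi \circ c_x) \Res^{S}_{\varphi(\leftexp{x}L \cap Q)} V$. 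By Frobenius reciprocity, such an induced module has a trivial $L$-summand if and only if the module being induced has a trivial $(L \cap Q^x)$-summand, and since isogation (being induced by an isomorphism) preserves trivial summands, this holds if and only if $\Res^S_{\varphi(\leftexp{x}L \cap Q)} V$ has a trivial summand. Collecting terms, $\Res^{\Gamma_{\Omega}}_{L} V_{\Omega}$ has a trivial summand iff there exist $x \in S$ and $\varphi : Q \to S$ in $\Isot(\Omega)$ with $\Res^S_{\varphi(\leftexp{x}L \cap Q)} V$ having a trivial summand, which is precisely the stated condition. The one point requiring care is the compatibility of the two descriptions of the fixed-point set — the geometric one through $\alpha_{G_x}$ versus the one through $\iota_H$ and $L$; here one uses that $\alpha_H = (S \hookrightarrow \Gamma_{\Omega}) \circ \iota_H$ and that restriction of $V_{\Omega}$ to $S$ followed by further restriction is computed by the biset decomposition above, so that the two notions of ``$K$ fixes a point'' agree. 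I expect verifying this bookkeeping (and that it is independent of the choice of $H$, using the compatible family property) to be the only genuinely delicate part; everything else is a direct chaining of the quoted results.
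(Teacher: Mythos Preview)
Your proposal is correct and follows essentially the same route as the paper's proof: build $\Gamma=\Gamma_{\Omega}$ and $V_{\Omega}$, feed $(\alpha_H)=(\iota\circ\iota_H)$ and $\rho$ into Proposition~\ref{prop:startingpoint}, and then analyze fixed points by decomposing $\Res^{\Gamma_{\Omega}}_{L}V_{\Omega}$ via the biset/Mackey decomposition recalled just before the proposition. The only cosmetic difference is that the paper records the passage from $K$ to $L=\iota_H(K)$ explicitly as an inflation $\Inf^{K}_{K/J}$ with $J=\ker(\iota_H)\cap K$ (handling the possibility that $\iota_H$ is not injective), whereas you fold this into the ``bookkeeping'' remark; your Frobenius reciprocity justification is exactly what underlies the paper's one-line claim that the module in (\ref{eqn:composition}) has a trivial summand iff $\Res^{S}_{\varphi(\leftexp{x}L\cap Q)}V$ does.
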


\begin{proof} By Proposition \ref{prop:startingpoint}, there exists a $G$-action on $M \times \bbS^N$ for some $N$ so that for every $x\in M$ the $G_x$-action on $\bbS^N$ is diffeomorphic to the linear action coming from the representation $V_{\Omega}$ of $\Gamma _{\Omega}$. Assume $K \leq G$ fixes a point on $M \times \bbS^N$. Then $K\leq H$ for some $H \in \cH $. Let $L=\iota _H (K)$. Then, the  $\bbC L$-module $\Res ^{\Gamma _{\Omega}}_{L} V _{\Omega}$ is a
direct sum of modules in the form
$$\Ind ^{L}_{L\cap  Q^x} \Isog ^*(\varphi \circ c_{x}) \Res
^{S}_{\varphi (\leftexp{x}L\cap Q)} V$$ where $x\in S$ and
$\varphi:Q\to S$ is in $\Isot (\Omega )$. Hence as a $\bbC K$-module, 
$\left(\iota _H\right)^* \left(\Res ^{\Gamma _{\Omega}}_{L} V _{\Omega}\right)$ is a
direct sum of modules of the form
\begin{equation}\label{eqn:composition}
\Inf ^{K}_{K/J}\Isog ^*(\iota _H )\Ind ^{L}_{L\cap  Q^x} \Isog ^*(\varphi \circ c_{x}) \Res
^{S}_{\varphi (\leftexp{x}L\cap Q)} V
\end{equation} 
where $J=\ker(\iota _H)\cap K$, $x\in S$, and
$\varphi:Q\to S$ is in $\Isot (\Omega )$. A $\bbC K$-module of the form $(\ref{eqn:composition})$ has a trivial summand if and only if $\Res ^S _{\varphi (\leftexp{x} L \cap Q)} V$ has a trivial summand.
Therefore $K\leq G$ fixes a point on $M \times \bbS^N$ if and only if  there exists $H \in \Fa$, $x\in S$, and
$\varphi:Q\to S$ in $\Isot (\Omega )$ 
such that $K \leq H$ and $\Res
^{S}_{\varphi (\leftexp{x}L\cap Q)} V$
has a trivial summand. 
\end{proof}

\section{Constructing $\cF$-stable bisets}
\label{sect:Some new applications}

In the previous section we summarized the method for constructing smooth actions using fusion systems. The main  result of this method is stated as Proposition \ref{prop:ConsSmoothActViaBisets}. This proposition 
suggests that to construct new smooth actions on products of spheres, we need to understand how to construct left $\cF$-stable bisets with large isotropy subgroups. Note that for a fusion system $\cF$ on a finite group $S$, the $S$-$S$-biset $S\times S$ is a left $\cF$-stable biset. However it is clear that this biset will not be very useful for constructing free actions.  

In this section, we prove a proposition that is very useful for constructing left $\cF$-stable bisets with large isotropy. We start with a definition.

\begin{definition}\label{def:stronglyclosed} Let $\Fu $ be a fusion system on a finite group $S$.
Then we say $K$ is a {\it strongly} $\Fu $-{\it closed} subgroup of $S$ if
for any subgroup $L\leq K$ and for any morphism $\varphi : L \to S $
in $\Fu$ we have $\varphi (L)\leq K$
\end{definition}

Note that if $K$ is a strongly $\cF$-closed subgroup of $S$, then the fusion system $\cF$ restricts to a fusion system on $K$. We denote this fusion system by $\cF |_{K}$. Now we are ready to state our main result in this section.

\begin{proposition}\label{prop:Fchar_leftFbiset} Let $\Fu $ be a fusion system on a finite group $S$ and $K$ be a strongly $\Fu $-closed subgroup
of $S$. Let $\Omega '$ be a left $\Fu | _{K}$-stable 
$K$-$K$-biset. Then, the $S$-$S$-biset 
$$\Omega = S\times _{K} \Omega ' \times _{K} S$$
is left $\Fu $-stable. Moreover, if $\varphi: Q \to S$ is in $\Isot (\Omega)$, then $\varphi$ can be expressed as a composition $$Q \maprt{\varphi'} K\hookrightarrow S$$ for some  $\varphi ' : Q \to K$ in $\Isot (\Omega ')$.
\end{proposition}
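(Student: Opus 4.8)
The plan is to verify the two assertions separately. For the $\cF$-stability of $\Omega = S \times_K \Omega' \times_K S$, I would work at the level of transitive summands and use the description of bifree bisets in terms of graphs $\Delta(\varphi)$. First I would record the product formula: composing the induction biset $S \times_K (-)$, a transitive bifree summand $K \times_{\Delta(\psi)} K$ of $\Omega'$ (with $\psi : R \to K$ a monomorphism, $R \le K$), and the restriction biset $(-) \times_K S$ gives a bifree $S$-$S$-biset whose transitive summands have the form $S \times_{\Delta(\psi \circ c_x)} S$ for suitable $x$, after applying the Mackey/double-coset decomposition for the two $K$-amalgamations inside $S$. This already isolates the ``Moreover'' clause: each $\varphi \in \Isot(\Omega)$ factors as $Q \xrightarrow{\varphi'} K \hookrightarrow S$ with $\varphi' \in \Isot(\Omega')$ (taking $\varphi' = \psi \circ c_x$ restricted appropriately and using that $K$ is a \emph{subgroup} of $S$, so the final inclusion is literally the inclusion). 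I would present this computation via the isomorphism $S \times_K \Omega' \times_K S \cong \bigsqcup (S \times_K (K \times_{\Delta(\psi)} K) \times_K S)$ and then $S \times_K (K \times_{\Delta(\psi)} K) \cong S \times_{\Delta(\psi)} K$, and $(K \times_{\Delta(\psi)} S) \times_K S$ decomposes over $K \backslash S$, each piece contributing $K \times_{\Delta(\psi \circ c_x)} S$ — patching these gives $S \times_{\Delta(\psi\circ c_x)} S$.

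For left $\cF$-stability, I must show that for every $Q \le S$ and every $\varphi \in \Hom_{\cF}(Q,S)$, the $Q$-$S$-bisets ${}_Q \Omega$ and ${}_\varphi \Omega$ are isomorphic. The cleanest route is to reduce to $\Omega' $: since $K$ is strongly $\cF$-closed, a morphism $\varphi : Q \to S$ in $\cF$ need not have image in $K$, but the relevant restrictions that appear after expanding ${}_\varphi(S \times_K \Omega' \times_K S)$ via double cosets will land inside $K$ (because the $K$-$K$-biset $\Omega'$ only ``sees'' $K$, and the amalgamation $S \times_K -$ absorbs the ambient part). Concretely, using the double-coset formula ${}_\varphi (S \times_K X) \cong \bigsqcup_{\varphi(Q)\backslash S / K} {}_{\varphi}(\text{restriction/conjugation of } X)$, the left $Q$-action gets rewritten through maps of the form $c_s \circ \varphi$ restricted to $\varphi^{-1}({}^sK)$, and I claim these restricted maps are morphisms of $\cF|_K$ after identifying source and target with subgroups of $K$ — this is exactly where strong $\cF$-closedness is used: it guarantees that conjugating/restricting these $\cF$-morphisms produces $\cF$-morphisms between subgroups of $K$, hence $\cF|_K$-morphisms. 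Then the left $\cF|_K$-stability of $\Omega'$ applies summand-by-summand, and reassembling gives ${}_Q \Omega \cong {}_\varphi \Omega$. I would structure this as: (1) the double-coset decomposition of $S \times_K \Omega' \times_K S$ as a $Q$-$S$-biset via $\varphi$, (2) identification of each summand with ${}_{\psi'}\Omega'$-type pieces for $\psi'$ an $\cF|_K$-morphism, (3) invoke Definition \ref{definition:FStableBiset} for $\Omega'$, (4) reassemble.

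The main obstacle I anticipate is step (2): carefully checking that after the double-coset bookkeeping, the homomorphisms through which the left action is twisted are genuinely morphisms in $\cF|_K$ — i.e., that strong $\cF$-closedness really does what is needed. The subtlety is that $\varphi(Q)$ need not lie in $K$, so one must argue that the intersections $\varphi(Q) \cap {}^sK$, pulled back along $\varphi$ and then compared with their images, are connected by $\cF$-morphisms whose source and target both lie in $K$; strong closedness ensures that whenever an $\cF$-morphism is applied to a subgroup of $K$ the image stays in $K$, which is precisely the hypothesis needed to close the diagram inside $K$. A secondary (purely technical) nuisance will be keeping the amalgamated-product notation straight through two simultaneous $K$-identifications; I would handle this by always reducing first to a single transitive summand of $\Omega'$ and treating $S \times_K(-) $ and $(-)\times_K S$ one at a time, citing the basic biset calculus from \cite[Lemma 2.3.26]{bouc} as in the previous section.
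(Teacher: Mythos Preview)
Your overall strategy coincides with the paper's: write $\Omega=\Ind^S_K\,\Omega'\,\Res^S_K$, apply the Mackey decomposition to ${}_\varphi\Omega$, use strong $\cF$-closedness to see that the twisted homomorphisms $c_x\circ\varphi$ restricted to the relevant intersections are morphisms in $\cF|_K$, and then invoke the left $\cF|_K$-stability of $\Omega'$. So steps (1)--(3) of your plan are exactly what the paper does, and your identification of the issue in step~(2) is correct.

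The gap is in step~(4). After you apply $\cF|_K$-stability, \emph{every} summand in the Mackey decomposition of ${}_\varphi\Omega$ becomes the \emph{same} $Q$-$S$-biset, namely $\Ind^Q_{Q\cap K}\Res^K_{Q\cap K}\Omega'\Res^S_K$ (here one also uses that $K\trianglelefteq S$, hence $K^x=K$, and that strong closedness forces $Q\cap\varphi^{-1}(K)=Q\cap K$). Thus ``reassembling'' is not a matter of matching summands one-by-one; it is a counting problem: you must show that the number of double cosets $|\varphi(Q)\backslash S/K|$ equals $|Q\backslash S/K|$. This is where strong $\cF$-closedness is used a second, independent time. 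Since $K\trianglelefteq S$, the double coset count is $|S/\varphi(Q)K|$, so the question reduces to $|\varphi(Q)\cap K|=|Q\cap K|$; and that equality follows by applying strong closedness to $\varphi|_{Q\cap K}$ and to $\varphi^{-1}|_{\varphi(Q)\cap K}$. Your outline does not mention this, and your remark that the ``main obstacle'' is step~(2) suggests you have not yet seen it.

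A minor point on the ``Moreover'' clause: no Mackey or double-coset decomposition is needed there. If $K\times_{\Delta(\varphi')}K$ with $\varphi':Q\to K$ is a transitive summand of $\Omega'$, then
\[
\Ind^S_K\bigl(\Ind^K_Q\,\Isog^*(\varphi')\,\Res^K_{\varphi'(Q)}\bigr)\Res^S_K
=\Ind^S_Q\,\Isog^*(\varphi')\,\Res^S_{\varphi'(Q)}
= S\times_{\Delta(\varphi')}S,
\]
by transitivity of induction and restriction alone; so every $\varphi\in\Isot(\Omega)$ is literally some $\varphi'\in\Isot(\Omega')$ followed by the inclusion $K\hookrightarrow S$. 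Your description introduces spurious conjugations $c_x$ and a decomposition over $K\backslash S$ that do not arise.
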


\begin{proof}  
 Note that the biset $\Omega$ is defined as a composition of three bisets, so we can write $\Omega=\Ind _K ^S \Omega ' \Res ^S _K $ where $\Ind ^S _K $ denotes the $S$-$K$-biset ${}_S S_K$ and $\Res ^S _K$ denotes the $K$-$S$-biset ${}_K S_S$. Given a morphism $\varphi :Q\to S$ in $\cF $, we have 
\begin{equation*} {}_{\varphi } \Omega = \Isog ^* (\varphi ) \Res ^{S}_{\varphi (Q)} \Omega  = \Isog ^* (\varphi ) \Res ^{S}_{\varphi (Q)} \Ind ^{S}_{K} \Omega ' \Res ^{S}_{K}. \end{equation*} 
Applying the Mackey formula to $\Res ^{S}_{\varphi (Q)} \Ind ^{S}_{K}$, we get  
\begin{eqnarray*} {}_{\varphi}\Omega &=& \underset{\varphi (Q)xK\in \varphi (Q)\backslash  S/K}{\coprod}   \Isog ^* (\varphi ) \Ind ^{\varphi (Q)}_{\varphi (Q)\cap K^x}\Isog^* (c_{x})\Res ^{K}_{\leftexp{x}\varphi (Q)\cap K} \Omega ' \Res ^{S}_{K} \\ &=&   \underset{\varphi (Q)xK\in \varphi (Q)\backslash  S/K}{\coprod}   \Ind ^{Q}_{Q\cap \varphi ^{-1}(K^x)}\Isog ^* (\varphi ) \Isog^* (c_{x})\Res ^{K}_{\leftexp{x}\varphi (Q)\cap K} \Omega ' \Res ^{S}_{K}  \end{eqnarray*}
where the second equality comes from the commutativity of isogation and induction. Now, since $\Omega '$ is a left $\Fu | _{K}$-stable 
$K$-$K$-biset and $K$ is a strongly $\cF$-closed subgroup, we obtain
\begin{eqnarray*}  {}_{\varphi } \Omega  & = &   \underset{\varphi (Q)xK\in \varphi (Q)\backslash  S/K}{\coprod}   \Ind ^{Q}_{Q\cap \varphi ^{-1}(K^x)}\Res ^{K}_{Q\cap \varphi ^{-1}(K^x)} \Omega ' \Res ^{S}_{K} \\  &= &   \underset{\varphi (Q)xK\in \varphi (Q)\backslash  S/K}{\coprod}   \Ind ^{Q}_{Q\cap K}\Res ^{K}_{Q\cap K} \Omega ' \Res ^{S}_{K}. 
\end{eqnarray*}

Notice that in the disjoint union above we are taking a disjoint union of $S$-$S$-bisets which do not depend on the double coset $\varphi (Q)xK$. So we can write  $${}_{\varphi} \Omega=n_{\varphi} \Ind ^{Q}_{Q\cap K}\Res ^{K}_{Q\cap K} \Omega ' \Res ^{S}_{K}$$ where $n_{\varphi} =|\varphi(Q)\backslash S / K|$. Note that this computation holds for any $\varphi : Q \to S$, in particular, for the inclusion map ${\rm inc} : Q\hookrightarrow S$. So, to prove that ${}_{\varphi} \Omega \cong {}_Q \Omega$ as $Q$-$S$-bisets it is enough to prove the equality $|\varphi (Q)\backslash S/K|=|Q\backslash S /K|$. 

Since $K$ is strongly $\cF $-closed, $K$ is normal in $S$. So, the number of double cosets $|\varphi (Q)\backslash  S/K|$ is equal to $|S/\varphi (Q)K|$. 
Therefore to prove the above equality, it is enough to show $|\varphi (Q)\cap K|=|Q\cap K|$. Note that $\varphi |_{Q \cap K} : Q \cap K \to S$ is a morphism in $\cF$ and $K$ is strongly $\cF$-closed, so the image of $\varphi |_{Q \cap K}$ must lie in $K$. This gives $\varphi (Q \cap K)\leq \varphi (Q)\cap K$, thus $|Q \cap K|\leq |\varphi(Q)\cap K|$. For the inverse inequality, apply the same argument to $\varphi^{-1} : \varphi (Q) \to S$. This completes the proof of the first part of the proposition.

For the second part, observe that $\Omega '$ can be expressed as a disjoint union of bisets of the form 
$$\Ind ^{K}_{Q} \Isog ^* (\varphi ' ) \Res ^{K}_{\varphi ' (Q)}$$ where the union is over morphisms
$\varphi ' :Q\to K$ in $\Isot (\Omega ')$ (see \cite[Lemma 2.3.26]{bouc}). Since
$\Omega= \ind _K ^S \Omega ' \Res ^S _K$, we obtain that
$$ \Omega = \ind _K ^S \Ind ^{K}_{Q} \Isog ^* (\varphi ' ) \Res ^{K}_{\varphi ' (Q)} \Res ^S _K=\Ind ^S _Q \Isog ^* (\varphi ') \Res ^S _{\varphi ' (Q))}$$ using the composition properties of restriction and induction bisets. So, we can conclude that every $\varphi \in \Isot(\Omega)$ can be expressed as a composition $\varphi: Q \maprt{\varphi'} K\hookrightarrow S$ for some  $\varphi ' : Q \to K$ in $\Isot (\Omega ')$. 
\end{proof}

Now Theorem \ref{theorem:AbelianIsotropy} follows from a recursive application of the following proposition.

\begin{proposition}\label{prop:AbelianIsotropy}
Let $k\geq 1$ be an integer and $G$ be a finite group acting smoothly on a manifold $M$. If all
the isotropy subgroups of $M$ are abelian groups with
rank $\leq k$, then $G$ acts freely and smoothly on $M \times \bbS^{n}$ for some positive
integer $n$ such that all the isotropy subgroups of $M\times \bbS^{n}$ are abelian groups with rank $\leq k-1$ .
\end{proposition}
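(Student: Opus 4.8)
The plan is to apply Proposition~\ref{prop:ConsSmoothActViaBisets} with a carefully chosen finite group $S$, fusion system $\cF$ on $S$, compatible family $(\iota_H)_{H\in\Fa}$, and $\cF$-stable biset $\Omega$, so that the resulting $G$-action on $M\times\bbS^n$ has all isotropy abelian of rank $\leq k-1$. First I would choose $S$ to be a suitable finite abelian group into which every isotropy subgroup $H\in\Fa$ (which is abelian of rank $\leq k$) embeds; for instance one may take $S=(\bbZ/m)^k$ for a sufficiently divisible $m$, and for each $H$ fix an embedding $\iota_H:H\hookrightarrow S$. Since $S$ is abelian, conjugations in $G$ between subgroups of $\Fa$ need not be realized by conjugations in $S$ (which are trivial), so one must take $\cF$ to be the smallest fusion system on $S$ containing all the isomorphisms $\iota_K\circ c_g\circ \iota_H^{-1}$ forced by the conjugation maps $c_g:H\to K$ in $G$; then $(\iota_H)_{H\in\Fa}$ is by construction a compatible family of homomorphisms from $\cF_G(G)$ to $\cF$ supported by $\Fa$.

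The crux is then to produce a left $\cF$-stable biset $\Omega$ with isotropy subgroups that are still ``large'' — of index dividing a prime in $S$ — so that the isotropy condition in Proposition~\ref{prop:ConsSmoothActViaBisets} drops the rank by one. Here I would invoke Proposition~\ref{prop:Fchar_leftFbiset}: it suffices to find a strongly $\cF$-closed subgroup $K\leq S$ of index $p$ (for some prime $p$) and a left $\cF|_K$-stable $K$-$K$-biset $\Omega'$, and then set $\Omega=S\times_K\Omega'\times_K S$. Because $S$ is abelian, every subgroup is normal, and a counting argument on morphisms in $\cF$ (together with the fact that the full $S$-$S$-biset $S\times S$ is always $\cF$-stable on any strongly closed subgroup) should let me take $\Omega'=K\times K$, so that $\Isot(\Omega')$ consists essentially of the identity of $K$. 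Then by the ``moreover'' part of Proposition~\ref{prop:Fchar_leftFbiset}, every $\varphi\in\Isot(\Omega)$ factors as $Q\xrightarrow{\varphi'}K\hookrightarrow S$ with $\varphi'\in\Isot(\Omega')$, so the relevant restriction $\Res^S_{\varphi(\leftexp{x}L\cap Q)}V$ is a restriction to a subgroup of $K$.

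Next I would choose the $\bbC S$-module $V$ so as to control exactly which subgroups fix points. Take $V$ to be (a sum of) one-dimensional representations of $S$ whose kernels, intersected with $K$, have rank $\leq k-1$; concretely, choosing $V$ so that no nontrivial subgroup of $K$ of rank $\geq k-1$... — more precisely, arranging that $\Res^S_{L'}V$ has a trivial summand only when $L'$ has rank $\leq k-1$ inside $K$. Since $K$ has index $p$ in $S$, a subgroup $L'\leq K$ together with the remaining ``direction'' mod $K$ accounts for one coordinate, so the isotropy condition $K\leq H$, $\Res^S_{\varphi(\leftexp{x}L\cap Q)}V$ has a trivial summand with $L=\iota_H(K)\leq K$ forces the fixed-point subgroups of $M\times\bbS^n$ to correspond to subgroups of rank $\leq k-1$. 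I must also check that the new isotropy subgroups remain abelian — this is automatic since they are subgroups of the old isotropy subgroups $H\in\Fa$, which are abelian, and the spheres glued in contribute nothing new to the isotropy beyond intersections with the $H$.

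The main obstacle I anticipate is the combinatorial bookkeeping in choosing $V$ and $K$ so that the isotropy drop is exactly by one in rank: one needs $\Res^S_{L'}V$ to fail to have a trivial summand precisely for the rank-$k$ subgroups $L'$ of $S$ that could arise as $\iota_H(K_x)$, while preserving enough triviality that $G$ does act (every isotropy is handled). Using that $S$ is abelian makes representation theory transparent — irreducibles are characters — so this reduces to a statement about covering $S$ by kernels of characters, and the strongly closed subgroup $K$ can be taken to be such a kernel that is additionally $\cF$-invariant; existence of an $\cF$-invariant character of the right kind is where the hypothesis ``abelian of rank $\leq k$'' is really used, and is the step I would write out in full detail.
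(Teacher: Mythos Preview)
Your proposal has a genuine gap at the choice of $\Omega'$. You take $\Omega' = K \times K$ and assert that $\Isot(\Omega')$ ``consists essentially of the identity of $K$.'' This is wrong: as a $K$-$K$-biset, $K \times K$ is transitive with trivial point stabilizer, i.e.\ $K \times K \cong K \times_{\{1\}} K$, so $\Isot(\Omega')$ contains only the inclusion $\{1\} \hookrightarrow K$. By the ``moreover'' clause of Proposition~\ref{prop:Fchar_leftFbiset}, every $\varphi : Q \to S$ in $\Isot(\Omega)$ then has $Q = \{1\}$, and the fixed-point criterion of Proposition~\ref{prop:ConsSmoothActViaBisets} reads ``$\Res^S_{\{1\}} V$ has a trivial summand,'' which is vacuously satisfied. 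Your $\Omega$ therefore gives no reduction whatsoever: the isotropy on $M \times \bbS^N$ coincides with that on $M$. The paper flags exactly this pitfall at the opening of Section~\ref{sect:Some new applications}: the biset $S \times S$ is always left $\cF$-stable but ``will not be very useful for constructing free actions.'' Your own stated goal (isotropy of $\Omega$ of prime index in $S$) is inconsistent with your choice of $\Omega'$.

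The paper's argument differs from yours in both the choice of $K$ and of $\Omega'$. It takes $\cF$ to be the \emph{largest} fusion system on $S$ and sets $K = \{x \in S : \text{the order of } x \text{ is square-free}\}$; this $K$ is strongly $\cF$-closed (injective homomorphisms preserve element orders) and is a product of elementary abelian $p$-groups of rank $k$. The crucial extra property is that such a $K$ is $\cF$-\emph{characteristic}: every monomorphism $L \to K$ in $\cF|_K$ extends to an automorphism of $K$, by complementing $L$ and its image inside $K$. This allows one to take
\[
\Omega' = \coprod_{\varphi \in \Aut_{\cF|_K}(K)} K \times_{\Delta(\varphi)} K,
\]
which is left $\cF|_K$-stable and has $\Isot(\Omega')$ consisting of automorphisms of $K$ --- so $Q = K$ throughout. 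With $V$ a one-dimensional character of $S$ that is nontrivial on every Sylow $p$-subgroup of $K$, the criterion in Proposition~\ref{prop:ConsSmoothActViaBisets} then fails precisely for rank-$k$ subgroups $U$ (since $\iota_H(U) \cap K$ contains a full Sylow $p$-subgroup of $K$, on which $V$ is nontrivial). The mechanism is not an index-$p$ strongly closed subgroup or an $\cF$-invariant character; it is forcing $Q = K$ in $\Isot(\Omega)$, and that is exactly what the $\cF$-characteristic property delivers.
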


\begin{proof} Let $G$ be a finite group acting smoothly on a manifold $M$. Let  $\Fa$ denote the family of
isotropy subgroups of the $G$-action on $M$. Assume that all the subgroups $H \in \Fa$ are abelian with rank $\leq k$. Then there exists a finite abelian 
group $S$ of rank $k$ such that for every $H \in \Fa$ there is a monomorphism $\iota _H : H \to S$. Take $\Fu $ to be the largest possible fusion system on $S$. Let $K$ be the subgroup of $S$ defined as follows
$$K=\left\{x\in S\, | \, \text{order of } x \text{ is a square free product of primes} \right\}.$$
It is clear that $K$ is a strongly $\cF$-closed subgroup of $S$. This allows us to restrict the fusion system $\cF$ to $K$. Let $\cF_K$ denote the restriction of $\cF$ to $K$.

We claim that $K$ is also $\cF$-characteristic \cite[Definition 6.2]{unlu-yalcin2}, i.e.,  for any $L\leq K$ and for any morphism $\varphi : L \to K $
in $\Fu _K$, there exists a morphism $\widetilde{\varphi }:K\to K$  in $\Fu _K$ such that $\widetilde{\varphi }(l)=\varphi (l)$ for all $l \in
L$. To see this, note that $K$ is a direct product of elementary abelian $p$-subgroups, so for any $L \leq K$, there are direct sum decompositions $K=L\oplus J=\varphi(L)\oplus J'$ where $J, J' \leq K$. Since $J\iso J'$, we can choose an isomorphism $\psi : J\to J'$ and define $\widetilde \varphi :K \to K$ as the map $\widetilde \varphi (l,j)=(\varphi (l), \psi (j))$ for every $l\in L$ and $j\in J$. So, $K$ is $\cF$-characteristic.  

Now, since $K$ is $\cF$-characteristic,  the $K$-$K$-biset 
$$\Omega '=\coprod _{\varphi \in \Aut _{\Fu _K}(K)} K\times _{\Delta(\varphi )} K$$
is left $\Fu _K$-stable (see \cite[Lemma 6.3]{unlu-yalcin2}). Using Proposition \ref{prop:Fchar_leftFbiset} we conclude that the $S$-$S$-biset
$$\Omega = S\times _{K} \Omega ' \times _{K} S$$
is left $\Fu $-stable. 

Let $V$ be a one-dimensional $\mathbb{C}S$-module which is not trivial when it is 
restricted to any Sylow $p$-subgroup of $K$. By Proposition 
\ref{prop:ConsSmoothActViaBisets}, $G$ acts smoothly on $M \times \bbS^N$ for some $N$ so that if $U\leq G$ fixes a point $M \times \bbS^N$, then $U\leq H$ for some $H \in \Fa$ and  $$ W=\Res ^{S}_{\varphi (\iota _H (U)\cap Q)} V$$ is trivial for some $\varphi : Q \to S$ in $\Isot (\Omega)$.
By Proposition \ref{prop:Fchar_leftFbiset}, every morphism $\varphi : Q \to S$ in $\Isot (\Omega)$ is of the form $\varphi : K \to K \hookrightarrow S$, in particular, $Q=K$. If $U$ has rank equal to $k$, then $\varphi (\iota _H (U)\cap K)$ includes a Sylow $p$-subgroup of $K$, so $W$ is not trivial for such subgroups. This completes the proof. 
\end{proof}

\section{Free actions of solvable groups}
\label{sect:solvable}

We now consider the problem stated in the introduction
which asks whether or not every finite group can act freely on some product of spheres with trivial action on homology.
We first introduce some terminology which we believe is useful for studying this problem  and then we prove Theorem \ref{theorem:FreeActionsOfSolvableGroups} which solves the problem for finite solvable groups. 

Let $\Gamma _i$ be a group for $i=1,2$ and $\Fu _{\Gamma _i}$ be a fusion system on the group $\Gamma _i$ for $i=1,2$. Let  ${\mathbf A}=\left(\alpha_H\right)_{H\in \cH }$ be a compatible family of homomorphisms from $\cF _G (G)$ to $\Fu _{\Gamma_1}$ supported by $\Fa$ and ${\mathbf B}=\left(\beta _K\right)_{K\in \cK }$ be a compatible family of homomorphisms from 
$\Fu _{\Gamma_1}$ to $\Fu _{\Gamma_2}$ supported by $\cK$. If for every $H \in \cH$, we have $\alpha _H (H)\in \cK$, then we can compose ${\mathbf 
A}$ with ${\mathbf B}$ and obtain $${\mathbf B}\circ {\mathbf A}=\left(\beta _{\alpha_H(H)}\circ \alpha_H\right)_{H\in \cH }$$ which is a compatible family of homomorphisms from $\cF _G (G) $ to $\Fu 
_{\Gamma_2}$ supported by $\cH$.

Note that if $\rho: \Gamma _1\to \Gamma _2$ is a group homomorphism, then $\rho $ induces a compatible family of homomorphism from $\Fu _{\Gamma _1}(\Gamma _1)$ to $\Fu _{\Gamma _2}(\Gamma _2)$. So, now we can give the following definition.

\begin{definition} Let ${\mathbf V}=(V_H)_{H\in \Fa}$ be a compatible family of unitary representations. We say ${\mathbf V}$ factors 
through a group $\Gamma $ if there exists a homomorphism $\rho : \Gamma 
\to U(n)$ and a compatible family of homomorphisms ${\mathbf A}$ from $\cF _G (G)$ to $\cF _{\G} (\G)$ such that ${\mathbf V}=\rho \circ {\mathbf A}$
\end{definition}

Now we can rephrase Proposition \ref{prop:startingpoint} in the following way. 

\begin{proposition}\label{proposition:ConstructingSmoothActions}
Let $G$ be a finite group and $M$ be a finite dimensional smooth manifold with a smooth $G$-action. Let $\cH $ denote the family of isotropy subgroups of the $G$-action on $M$. Let 
$ {\mathbf V}=(V _H)_{H \in \Fa}$ be a compatible family of unitary representations. If $\, 
{\mathbf V}$ factors through a finite group, then there exists a smooth $G$-action on $M \times \bbS^N$ for some $N$ such that
for every $x \in M$, the $G_x$-action on the sphere $\{x\}\times
\bbS^N$ is given by the linear $G_x$-action on $\bbS(V_{G_x} ^{\oplus k})$ for some $k$. Moreover if the $G$-action on $M$ is homologically trivial, then there exists an $N$ such that the $G$-action on $M \times \bbS ^N$ is also homologically trivial.
\end{proposition}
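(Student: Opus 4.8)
For the first assertion I would simply unwind the hypothesis that $\mathbf V$ factors through a finite group. By definition this provides a finite group $\Gamma$, a homomorphism $\rho\colon\Gamma\to U(n)$, and a compatible family of homomorphisms $\mathbf A=(\alpha_H)_{H\in\Fa}$ from $\cF _G(G)$ to $\cF _{\Gamma}(\Gamma)$ with $\mathbf V=\rho\circ\mathbf A$; that is, $V_H=\rho\circ\alpha_H$ for every $H\in\Fa$. As noted after Definition \ref{def:compatiblehomomorphism}, such an $\mathbf A$ is the same data as a compatible family of representations $\alpha_H\colon H\to\Gamma$. Feeding $G$, $M$, $\Gamma$, $\mathbf A$ and the unitary representation $\rho$ into Proposition \ref{prop:startingpoint} produces a positive integer $N$ and a smooth $G$-action on $M\times\bbS^N$ for which the $G_x$-action on $\{x\}\times\bbS^N$ is diffeomorphic to the linear $G_x$-action on $\bbS\bigl((\rho\circ\alpha_{G_x})^{\oplus k}\bigr)=\bbS(V_{G_x}^{\oplus k})$ for some $k$, which is exactly the first statement. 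Applying the same proposition with $\rho^{\oplus m}$ in place of $\rho$ merely replaces $k$ by $mk$ and enlarges $N$, so we may assume in addition that $N>\dim M$.

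For the homological statement, the key observation is that the action built by Proposition \ref{prop:startingpoint} realizes $M\times\bbS^N$ as the total space of a $G$-equivariant oriented $\bbS^N$-bundle $p\colon M\times\bbS^N\to M$, namely the unit sphere bundle of a $G$-equivariant complex vector bundle $\eta$ over $M$ produced via the L\"uck--Oliver theorem, as in \cite{unlu-yalcin2}. On the base, the $G$-action is the original one, which is homologically trivial by hypothesis; on the fibres, $G$ acts $\bbC$-linearly, hence preserves the canonical orientation coming from the complex structure. Thus $p$ is an oriented $\bbS^N$-bundle on which $G$ acts by fibrewise orientation-preserving bundle maps, so its Gysin sequence is a long exact sequence of $\bbZ G$-modules. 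Since $N>\dim M$ we have $H^{N+1}(M)=0$, so the Euler class of $\eta$ vanishes and this sequence splits into short exact sequences of $\bbZ G$-modules
\begin{equation*}
0\longrightarrow H_{i-N}(M)\longrightarrow H_i(M\times\bbS^N)\xrightarrow{p_*}H_i(M)\longrightarrow 0 .
\end{equation*}
For each $i$, at least one of $H_{i-N}(M)$ and $H_i(M)$ vanishes because $N>\dim M$, so $H_i(M\times\bbS^N)$ is $\bbZ G$-isomorphic to one of them; both carry the trivial $G$-action, and therefore so does $H_*(M\times\bbS^N)$.

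The only step that calls for any thought is the claim that $N$ can be taken larger than $\dim M$ without losing the prescribed form of the $G_x$-actions on the fibres, and this follows at once from the freedom to replace $\rho$ by a direct sum of copies of itself (equivalently, to enlarge the multiplicity $k$) in Proposition \ref{prop:startingpoint}. The remaining points — that $p$ is $G$-equivariant for the new action, that the complex structure on $\eta$ furnishes a $G$-invariant fibre orientation, and hence that the maps in the Gysin sequence are $\bbZ G$-module homomorphisms — are routine once the construction recalled in Section \ref{section:ConstructionsOfFreeActions} is made explicit, so I do not expect a genuine obstacle.
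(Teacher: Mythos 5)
Your proof is correct and follows the paper's argument in outline: both deduce the first claim directly from Proposition \ref{prop:startingpoint}, and both prove homological triviality by viewing $M\times\bbS^N$ as the sphere bundle of a $G$-equivariant orientable vector bundle over $M$, arranging $N>\dim M$, and reading off $H_*(M\times\bbS^N)$ from a Gysin sequence of $\bbZ G$-modules. The one step you handle differently is the equivariance of the Thom class. The paper allows for the possibility that $gU_p=\pm U_p$ and removes the sign by replacing the bundle with its Whitney sum with itself, using the multiplicativity of Thom classes from \cite{holm}; you instead observe that the bundle produced via the L\"uck--Oliver theorem is a complex $G$-vector bundle, so $G$ acts by fibrewise complex-linear maps, preserves the canonical orientation, and hence fixes $U_p$ outright. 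Your route is slightly cleaner and avoids the appeal to \cite{holm}, at the cost of leaning on the specific complex form of the bundle coming from Proposition \ref{prop:startingpoint}, whereas the paper's doubling trick works for any orientable equivariant bundle. Your device for forcing $N>\dim M$ (replacing $\rho$ by $\rho^{\oplus m}$) is a harmless variant of the paper's ``take further Whitney sums,'' and your splitting of the Gysin sequence into short exact sequences via the vanishing of the Euler class is the same computation the paper performs implicitly when it asserts $H_n(SE)\cong H_n(M)$ for $n<N$ and $H_n(SE)\cong H_{n-N}(M)$ for $n\geq N$.
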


\begin{proof} We only need to prove the last statement. The first part is already proved in Proposition \ref{prop:startingpoint}. Note that the $G$-space $M \times \bbS ^N$ is constructed as the total space of a sphere bundle of a  $G$-equivariant  orientable vector bundle $p: E \to M$. So, we need to show that there is an $N>0$ such that the $G$-action on the homology of $SE$ is trivial where $SE$ denotes the total space of the sphere bundle of $p$.  We will show this  using the Gysin sequence which relates the homology of $M$ to the homology of $SE$.  Recall that the Gysin sequence for a sphere bundle is obtained from the long exact sequence of the pair $(DE, SE)$ where $DE$ denotes the total space of the disk  bundle of $p$. In particular, we have the following diagram
\begin{equation*}
\xymatrix{\dots \ar[r]  & H_n (SE) \ar[r] & H_n (DE) \ar[d]_{\cong} ^{p_*} \ar[r] & H_n (DE, SE) \ar[d]_{\cong}^{\Phi } \ar[r]^-{\partial} &H_{n-1} (DE) \ar[r] & \dots \\ & &H_n (M)  & H_{n-N-1}(M) & &}
\end{equation*}
where the homomorphism $\Phi$ is defined by $\Phi (z)=p_* (U_p \cap z)$ for every $z\in H_n(DE, SE)$. Here $U_p \in H_{N+1} (D(E), S(E))$ denotes the Thom class of $p$ (see \cite[pg. 260]{spanier}). Note that if the Thom class is invariant under the $G$-action, then we can conclude that the Gysin sequence is a sequence of $\bbZ G$-modules since the rest of the above diagram is formed by $\bbZ G$-modules.

The $G$-action on the Thom class may be nontrivial in general, but for every $g\in G$, we always have $gU_p=\pm U_p$. The Thom class of the Whitney sum of two vector bundles is the cup product of Thom classes of corresponding bundles (see \cite[Thm. 2]{holm}). So, by taking the Whitney sum of $p$ with itself, we can assume that the $G$-action on the Thom class is trivial. This means that we can assume that the Thom isomorphism $\Phi : H_n (DE, SE) \maprt{\cong} H_{n-N-1} (M)$ is an isomorphism of $\bbZ G$-modules and hence the Gysin sequence is a sequence of $\bbZ G$-modules. 

By taking further Whitney sums if necessary, we can also assume that $N >\dim M$. Then, we will have $H_n (SE) \cong H_n (M)$ for all $n<N$ and $H_n (SE)\cong H_{n-N} (M)$ for all $n \geq N$. These isomorphisms are isomorphisms of $\bbZ G$-modules and since it is assumed that the $G$-action on the homology of $M$ is trivial,  we can conclude that the $G$-action on the homology of $SE$ is trivial.  
\end{proof}

Note that in the above proposition, if $H$ is  a maximal group  in $\cH 
$ and if $V _H$ has no trivial summands, then no point on $M \times \bbS^N$ will be fixed by $H$. Hence the total size of isotropy subgroups of the $G$-action on $M \times \bbS^N$ would be smaller.

\begin{definition}\label{defn:reducible} Let $\cH _1$ and $\cH _2$ be two families of 
subgroups in a group $G$ which are closed under conjugation and taking subgroups.  Let $\Fu $ be a fusion system on the group 
$G$. Then we say $\cH _2$ is reducible to $\cH _1$ if there 
exists a compatible family of unitary representations ${\mathbf V}=(V _H)_{H \in \Fa}$ of $\cH _2$ such that $\mathbf V$ factors through a finite group and $\cH _1$ is equal to the family of 
all subgroups of isotropy groups of the $G$-action on the disjoint union of the left 
$G$-sets $G\times _H \bbS(V _H)$ over all $H \in \cH _2$. 
\end{definition}

Let $\cH _1$ and $\cH _2$ be two families of subgroups of $G$ which are closed under conjugation and taking subgroups. 
Then we write $\cH _1 \leq \cH _2 $ if every subgroup in $\cH _1$ 
is contained in some subgroup in $\cH _2$. Note that if $\cH _2$ is 
reducible to $\cH _1$ then $\cH _1 \leq \cH _2 $. Moreover we write $\cH 
_1<\cH _2 $ if $\cH _1 \leq \cH _2 $ and $\cH _1\neq \cH _2$. In 
particular, this means that there exists a maximal subgroup in $\cH _1$ which is not maximal in $\cH _2 $.

\begin{definition} Let $\Fu $ be a fusion system on a group $G$. A 
sequence $$\cK=\cH _0 < \cH _1 < \dots <\cH _n=\cH$$ of families of subgroups in 
$G$ closed under conjugation and taking subgroup is called a {\it reduction sequence} of length $n$ from $\cH$ to $\cK$ if the family $\cH _{i}$ is reducible to $\cH _{i-1}$ for all $i\in \{1,2,\dots ,n\}$.
Moreover in this case we say $\cH$  is reducible to $\cK$ in $n$ steps.
\end{definition}

If there is a reduction sequence in $\cF$ from $\cH$ to $\cK=\{1\}$, then we say the family $\cH$ is reducible. If $\cH$ is  the family of all subgroups of $G$, then we say the fusion system $\cF$ is reducible. If $\Fu =\cF_G (G) $ is a reducible fusion system, we say the group $G$ is reducible.  The smallest number of steps required to reduce a family $\Fa$, a fusion system $\cF$, or a group $G$ are denoted by $n_{\Fa}$, $n_{\cF}$, and $n_G$, respectively. 

\begin{proposition}\label{prop:FreeActionsOn_nG_spheres}
Let $G$ be a finite group. If $G$ is a reducible group then $G$ can act freely, smoothly, and homologically trivially on a product of $n_G$ spheres.
\end{proposition}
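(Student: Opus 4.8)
The plan is to prove this by building up the product one sphere at a time, using Proposition \ref{proposition:ConstructingSmoothActions} to carry out a single reduction step, and inducting along a reduction sequence. Since $G$ is reducible in $n_G$ steps, fix a reduction sequence $\{1\}=\cH_0<\cH_1<\dots<\cH_{n_G}=\cH$ in $\cF_G(G)$, where $\cH$ is the family of all subgroups of $G$, so that $\cH_{i}$ is reducible to $\cH_{i-1}$ for each $i$. Start with $M^{(0)}$ a point equipped with the trivial $G$-action; this action is smooth and homologically trivial, and its only isotropy subgroup is $G$, so the family of isotropy subgroups of $M^{(0)}$ is contained in $\cH=\cH_{n_G}$.

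For the inductive step, suppose that for some $0\le j<n_G$ we have a smooth, homologically trivial $G$-action on $M^{(j)}=\bbS^{N_1}\times\dots\times\bbS^{N_j}$ (with $M^{(0)}$ a point) whose family of isotropy subgroups is contained in $\cH_{n_G-j}$. Since $\cH_{n_G-j}$ is reducible to $\cH_{n_G-j-1}$, there is a compatible family of unitary representations $\mathbf V=(V_H)_{H\in\cH_{n_G-j}}$ which factors through a finite group and for which $\cH_{n_G-j-1}$ is the family of all subgroups of isotropy groups of the $G$-action on $\coprod_{H\in\cH_{n_G-j}}G\times_H\bbS(V_H)$. Restricting $\mathbf V$, together with the homomorphisms witnessing that it factors through a finite group, to the (conjugation-closed) family of isotropy subgroups of $M^{(j)}$ yields a compatible family of representations of that family which still factors through the same finite group. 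Hence Proposition \ref{proposition:ConstructingSmoothActions} produces, for a suitable $N_{j+1}$, a smooth $G$-action on $M^{(j+1)}=M^{(j)}\times\bbS^{N_{j+1}}$ such that for every $x\in M^{(j)}$ the $G_x$-action on $\{x\}\times\bbS^{N_{j+1}}$ is the linear action on $\bbS(V_{G_x}^{\oplus k})$ for some $k$; moreover $N_{j+1}$ can be chosen so that this new action is again homologically trivial.

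It remains to verify that the family of isotropy subgroups of the $G$-action on $M^{(j+1)}$ is contained in $\cH_{n_G-j-1}$, which closes the induction. The stabilizer of a point $(x,v)\in M^{(j)}\times\bbS^{N_{j+1}}$ is the stabilizer of $v$ for the $G_x$-action on $\bbS(V_{G_x}^{\oplus k})$; since a subgroup fixes a point of $\bbS(W^{\oplus k})$ exactly when it has a nonzero fixed vector in $W$, this stabilizer is an isotropy subgroup of the $G_x$-action on $\bbS(V_{G_x})$. By hypothesis $G_x\in\cH_{n_G-j}$, so the isotropy subgroups of $G_x$ on $\bbS(V_{G_x})$ are, up to $G$-conjugacy and passage to subgroups, exactly the isotropy subgroups of the $G$-action on $G\times_{G_x}\bbS(V_{G_x})$, hence they lie in $\cH_{n_G-j-1}$. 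Running this for $j=0,1,\dots,n_G-1$ produces a smooth, homologically trivial $G$-action on $\bbS^{N_1}\times\dots\times\bbS^{N_{n_G}}$, a product of $n_G$ spheres, all of whose isotropy subgroups lie in $\cH_0=\{1\}$; that is, the action is free.

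The only genuine work is the bookkeeping of isotropy families in the last paragraph: one must reconcile the description of the isotropy data of $M^{(j)}\times\bbS^{N_{j+1}}$ coming from Proposition \ref{prop:startingpoint} (in terms of the linear spheres $\bbS(V_{G_x}^{\oplus k})$) with the isotropy data of the $G$-sets $G\times_H\bbS(V_H)$ appearing in the definition of reducibility. The elementary fact that a subgroup has a fixed point on $\bbS(W^{\oplus k})$ precisely when it has a nonzero fixed vector in $W$ makes this reconciliation routine, and the fact that a compatible family of representations restricts to a compatible family over any conjugation-closed subfamily (still factoring through the same finite group) guarantees that the hypotheses of Proposition \ref{proposition:ConstructingSmoothActions} are available at every stage.
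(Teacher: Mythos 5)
Your proposal is correct and follows essentially the same route as the paper: induct down a minimal-length reduction sequence starting from a point, applying Proposition \ref{proposition:ConstructingSmoothActions} at each stage to multiply by a sphere while keeping the action homologically trivial and shrinking the isotropy family. The extra bookkeeping you supply (restricting the compatible family to the actual isotropy family of the partial product, and matching fixed points on $\bbS(V_{G_x}^{\oplus k})$ with subgroups of isotropy groups of $G\times_H\bbS(V_H)$) is exactly the detail the paper leaves implicit.
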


\begin{proof} Let $G$ be reducible. Take a reduction 
sequence $$\{ 1\}=\cH _0 < \cH _1 < \dots <\cH _n=\cH$$ of families of subgroups in $G$ such that $n=n_G$ and  $\cH$ is the family of all subgroups in $G$. Note that $G$ acts on $M=pt$ smoothly and homologically trivially with isotropy subgroups in $\cH$. Now assume that $G$ acts smoothly on some smooth manifold $M$ homologically trivially with isotropy subgroups in $\cH _i$ for some $0\leq i \leq n$. By Definition  \ref{defn:reducible}, there exists a compatible family of unitary representations $(V_H)_{H\in \cH _i}$ such that subgroups that   fixes a point in $G \times _H \bbS (V_H)$ are in $\cH _{i-1}$ for every $H \in \cH_i$. Now by Proposition \ref{proposition:ConstructingSmoothActions}, we obtain a smooth $G$-action on $M \times \bbS ^N$ for some $N$ with trivial action on homology such that all the isotropy subgroups are in $\cH _{i-1}$. Continuing this way, we get a smooth free action on a product of $n$ spheres with trivial action on homology.
\end{proof}

To prove Theorem \ref{theorem:FreeActionsOfSolvableGroups} we will show that all solvable groups are reducible. The proof follows from a construction that is similar to the construction used in the previous section.  

\begin{proposition} Let $G$ be a finite solvable group. Then, $G$ is reducible.
\end{proposition}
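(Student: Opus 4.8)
The plan is to induct on the derived length of $G$, or more precisely to build a reduction sequence for the family of all subgroups of $G$ by peeling off, one prime at a time, using the special structure of abelian groups exploited in the previous section. First I would set up the base case: if $G$ is abelian, then Proposition \ref{prop:AbelianIsotropy}, applied recursively $\rk(G)$ times starting with $M = pt$, already exhibits $G$ as reducible (indeed with $n_G \leq \rk(G)$), since each application replaces the family of isotropy subgroups by one of strictly smaller rank and terminates at $\{1\}$. This is really the prototype: the key mechanism is that for an abelian $p$-group, or a product of elementary abelian $p$-groups, the maximal fusion system is $\cF$-characteristic on the "square-free part" subgroup $K$, so Proposition \ref{prop:Fchar_leftFbiset} produces a left $\cF$-stable biset $\Omega$ whose isotropy consists of a single monomorphism $K \to K \hookrightarrow S$, and a generic one-dimensional representation $V$ of $S$ that is nontrivial on every Sylow $p$-subgroup of $K$ then kills all rank-$k$ isotropy.

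For the inductive step, let $A = [G,G]$, which is solvable of smaller derived length, and let $Q = G/A$ be abelian. The idea is: the isotropy subgroups of the $G$-action we are building should be reduced by first handling the "$G/A$-part" and then invoking the inductive hypothesis on subgroups that land inside (conjugates of) $A$. Concretely, I would map the family $\Fa$ of isotropy subgroups into a finite group $S$ — here the natural choice is to take $S$ to be a finite abelian group receiving all the images $H \to H/(H\cap A) \hookrightarrow Q$, together with a fusion system $\cF$ on $S$ recording the $G$-conjugations; or, more robustly, one takes $S$ to be built from $G$ itself and uses a strongly $\cF$-closed subgroup $K \leq S$ corresponding to $A$. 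Since $A \nor G$, the subgroup of $S$ corresponding to $A$ is strongly $\cF$-closed, so Proposition \ref{prop:Fchar_leftFbiset} reduces the construction of a left $\cF$-stable biset on $S$ to constructing one on $K \cong$ (an abelian model of) $G/A$, which we can do by the abelian machinery above. A generic representation then strips away all isotropy subgroups that surject onto a "large" part of $G/A$, leaving isotropy subgroups contained in conjugates of $A$.

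The main obstacle — and the step I would spend the most care on — is making the bookkeeping of families precise so that the reduction sequences chain together: after the $G/A$-reduction step we have a $G$-action on some $M \times \bbS^{n_1} \times \cdots$ whose isotropy family $\cH'$ consists of subgroups lying in conjugates of $A$, but $A$ itself is not abelian, so we cannot apply Proposition \ref{prop:AbelianIsotropy} directly to $A$. We must instead observe that the $G$-action restricted to such isotropy behaves like an $A$-action (up to conjugation), invoke the inductive hypothesis that $A$ is reducible, and then \emph{transport} a reduction sequence for $A$ to a reduction sequence for $G$ on the family $\cH'$. This requires checking that a compatible family of representations for $\cF_A(A)$ extends (or induces up) to a compatible family for $\cF_G(G)$ supported on $\cH'$ — which works because $A$ is normal, so $G$-conjugation permutes the relevant data $A$-compatibly — and that "factors through a finite group" is preserved under this transport, which follows from Park's theorem (Lemma \ref{lemma:FstableBisets}) applied on the $A$-side. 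Concatenating the $G/A$-step with the transported reduction sequence for $A$ yields a reduction sequence from the family of all subgroups of $G$ down to $\{1\}$, so $G$ is reducible; tracking lengths gives a bound like $n_G \leq \rk(G/A) + n_A$, hence $n_G$ bounded in terms of the ranks of the abelian factors of a normal series. I would present the normal-series / derived-series induction as the cleanest packaging and relegate the family-bookkeeping to a lemma stated just before this proposition.
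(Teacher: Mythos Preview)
Your induction has a genuine gap at the transport step. The inductive hypothesis gives you a reduction sequence for $A=[G,G]$ with respect to $\cF_A(A)$, i.e.\ compatible families $(\alpha_H:H\to\Gamma)$ that intertwine $A$-conjugations with $\Gamma$-conjugations. To use these for $G$ on the family $\cH'$ of subgroups contained in $A$, you need them to intertwine $G$-conjugations, and normality of $A$ does not buy you this. Concretely, take $G=S_3$, $A=\bbZ/3$. A reduction sequence for $A$ is given by $\Gamma=\bbZ/3$, $\alpha_A=\id$, and a nontrivial character. Conjugation by a transposition is inversion on $A$, so $G$-compatibility demands a $\gamma\in\Gamma=\bbZ/3$ with $c_\gamma=$ inversion, which does not exist. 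So the black-box reduction sequence for $A$ simply fails to be compatible for $\cF_G(G)$, and Park's theorem ``on the $A$-side'' cannot repair this: left $\cF_A(A)$-stability of $\Omega$ only yields $\cF_A(A)\subseteq\cF_A(\Gamma_\Omega)$, not the larger $\cF_G(G)|_A\subseteq\cF_A(\Gamma_\Omega)$ you need.

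The paper sidesteps the whole issue by refusing to induct. Given any nontrivial family $\cH$, it takes the largest $n$ with every $H\in\cH$ contained in $G^{(n)}$, and maps each $H$ \emph{non-injectively} through the quotient $H\to HG^{(n+1)}/G^{(n+1)}\hookrightarrow S=(\bbZ/m)^r$, equipping $S$ with the \emph{maximal} fusion system. Because $G^{(n+1)}$ is normal in $G$, the kernels $H\cap G^{(n+1)}$ are preserved by $G$-conjugation, and because the target fusion system is maximal on an abelian group, every resulting injection between images is automatically a morphism there; so compatibility with $\cF_G(G)$ is free. Then the abelian machinery of Proposition~\ref{prop:AbelianIsotropy} drops $r=\max_H\rk(HG^{(n+1)}/G^{(n+1)})$ by one, giving a strictly smaller $\cH'$. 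The two crucial points your plan misses are that the $\iota_H$ are allowed to have kernels (Definition~\ref{def:compatiblehomomorphism} does not require injectivity), and that the maximal fusion system on the abelian target is what absorbs all $G$-conjugations --- this is exactly what fails in your $S_3$ example, and it is why the paper never needs to transport anything from $A$ back up to $G$.
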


\begin{proof} Let $G$ be a finite solvable group and $G^{(0)}=G$ and $G^{(i+1)}=[G^{(i)},G^{(i)}]$ for $i\geq 0$.
Let $\Fa $ be a family of subgroups in $G$ closed under conjugation and taking subgroups. Assume that $\cH \neq \{1\}$ and  $n$ is the largest integer such that $H \leq G^{(n)}$ for every $H \in \cH$. Let $$r=\max \, \{ \rk (HG^{(n+1)}/G^{(n+1)}) \ | \ H \in \cH\}$$
and let $S= (\bbZ /m)^r$ where $m$ is equal to the exponent of $G^{(n)}/G^{(n+1)}$. For each $H \in \cH$, choose a map $j_H : H G^{(n+1)}/G^{(n+1)} \to S$ and define $\iota _H:H \to S$ as the composition 
$$H\to H/H\cap G^{(n+1)}\cong HG^{(n+1)}/G^{(n+1)}\maprt{j_H}S$$ for every $H \in \Fa $. Let $\cF _S$ denote the largest possible fusion system on $S$. Then,  $(\iota _H)_{H\in \cH}$ is a compatible family of homomorphisms from $\cF _G (G)$ to $\cF _S$ supported by $\cH$. Using the biset and the representation constructed in Proposition \ref{prop:AbelianIsotropy}, we can reduce $\Fa $ to $\Fa '$ so that every $H \in \Fa '$ satisfies $\rk (HG^{(n+1)}/G^{(n+1)})\leq r-1$. In particular, $\cH '$ is strictly smaller than $\cH$. This shows that $G$ is reducible. \end{proof}

This completes the proof of Theorem \ref{theorem:FreeActionsOfSolvableGroups}. It is clear from the above proof that if we can find ways to construct $\cF$-stable bisets with large isotropy subgroups then we can prove more general theorems on constructions of free actions and possibly solve Problem \ref{prob:MainProblem} completely. In a future work we will investigate which conditions on a fusion system $\cF$ guarantee the existence of a left $\cF$-stable biset with relatively large isotropy subgroups.

\acknowledgement{We thank the referee for a careful reading of the paper and for many corrections and helpful comments.}

%\bibliographystyle{ih}
%\bibliography{ihmain}
%\end{document}
%%%%%%%%%%%%%%%%%%%%%%%%%%%%%%%%%%
\providecommand{\bysame}{\leavevmode\hbox
to3em{\hrulefill}\thinspace}
\providecommand{\MR}{\relax\ifhmode\unskip\space\fi MR }
% \MRhref is called by the amsart/book/proc definition of \MR.
\providecommand{\MRhref}[2]{%
  \href{http://www.ams.org/mathscinet-getitem?mr=#1}{#2}
} \providecommand{\href}[2]{#2}

\end{document}